\def\0{\mathbf{0}}
\def\1{\mathbf{1}}
\def\I{{\mathbf I}}
\def\L{{\mathbf L}}
\def\v{{\mathbf v}}
\def\W{{\mathbf W}}
\def\D{{\mathbf D}}
\def\R{{\mathbf R}}
\def\x{{\bf x}}
\def\u{{\bf u}}
\def\diag{\text{diag}}
\begin{document}

\title*{Dynamical Systems Theory and Algorithms for NP-hard Problems}
\author{Tuhin Sahai}
\institute{Tuhin Sahai \at Raytheon Technologies Research Center, 2855 Telegraph Ave. Suite 410, Berkeley, CA 94705 USA. \email{tuhin.sahai@gmail.com}}
%
%
\maketitle

\abstract*{This article surveys the burgeoning area at the intersection of dynamical systems theory and algorithms for NP-hard problems. Traditionally, computational complexity and the analysis of non-deterministic polynomial-time (NP)-hard problems have fallen under the purview of computer science and discrete optimization. However, over the past few years, dynamical systems theory has increasingly been used to construct new algorithms and shed light on the hardness of problem instances. We survey a range of examples that illustrate the use of dynamical systems theory in the context of computational complexity analysis and novel algorithm construction. In particular, we summarize a) a novel approach for clustering graphs using the wave equation partial differential equation,   b) invariant manifold computations for the traveling salesman problem, c) novel approaches for building quantum networks of Duffing oscillators to solve the MAX-CUT problem, d) applications of the Koopman operator for analyzing optimization algorithms, and e) the use of dynamical systems theory to analyze computational complexity. }

\abstract{This article surveys the burgeoning area at the intersection of dynamical systems theory and algorithms for NP-hard problems. Traditionally, computational complexity and the analysis of non-deterministic polynomial-time (NP)-hard problems have fallen under the purview of computer science and discrete optimization. However, over the past few years, dynamical systems theory has increasingly been used to construct new algorithms and shed light on the hardness of problem instances. We survey a range of examples that illustrate the use of dynamical systems theory in the context of computational complexity analysis and novel algorithm construction. In particular, we summarize a) a novel approach for clustering graphs using the wave equation partial differential equation,   b) invariant manifold computations for the traveling salesman problem, c) novel approaches for building quantum networks of Duffing oscillators to solve the MAX-CUT problem, d) applications of the Koopman operator for analyzing optimization algorithms, and e) the use of dynamical systems theory to analyze computational complexity. }
\vspace{0.2in}
{\bf Keywords:} Computational Complexity, Dynamical Systems Theory, NP-hardness, Heuristic Algorithms, Combinatorial Optimization.

\section{Introduction}
\label{sec:intro}
Dynamical systems theory and computational complexity have, predominantly, been developed as independent areas of research over the last century with little interaction and mutual influence. Dynamical systems theory has its origins in the seminal work of Henri Poincar\'e~\cite{Cit:Poincare} on celestial mechanics.  Computational complexity theory, on the other hand, originated in the works of Alan Turing~\cite{Cit:Turing} and Alonzo Church~\cite{Cit:Church} in the 1930s and has played an intimate role in the computing revolution of the twentieth century.

Eventually, dynamical systems theory (or nonlinear dynamics) found broad application beyond celestial mechanics. In particular, it has been used extensively to model and analyze engineering systems~\cite{Cit:Stro}, physics of natural phenomena, biological~\cite{Cit:Bio} and chemical processes~\cite{Cit:chem}, fluid dynamics~\cite{Cit:turb}, and epidemiology~\cite{Cit:epi} to name a few. Moreover, the analysis of dynamical systems is typically intimately tied to numerical methods~\cite{Cit:set_oriented,Cit:Igor} and scientific computation~\cite{Cit:scientific_comp}. 

Links to the applications (outlined in the previous paragraph) have played a critical role in the theoretical development of the field. For example, they have influenced the development of various sub-areas within nonlinear dynamics such as ergodicity~\cite{Cit:ergodicity}, chaos theory~\cite{Cit:Lorenz}, and symbolic dynamics~\cite{Cit:symb} to name a few. For a broad overview of the theoretical approaches to dynamical systems, we refer the reader to~\cite{Cit:Gucken}. Although, dynamical systems theory has found wide application in engineering and the sciences, it has received scant attention from the computer science community. 

Local continuous optimization techniques such as Nesterov's method~\cite{Cit:Nesterov} have recently been analyzed from a dynamical systems perspective~\cite{Cit:boyd_candes}. Nesterov's method is an optimal gradient descent algorithm in terms of convergence rate. In~\cite{Cit:boyd_candes}, the authors derive a dynamical system by invoking a continuous time limit of the optimization step size. They then analyze the resulting ordinary differential equations (ODEs) to provide valuable insight into the algorithm and its associated optimality. Additionally, in~\cite{Cit:wibisono} the authors use calculus of variations to gain additional insight into the convergence rates of accelerated gradient descent schemes. Although, this body of work does fall under the category of novel application of dynamical systems theory to optimization methods, we will not discuss it at length in this paper for two reasons a) this work has sparked extensive follow-on work and consequently, various summary articles and presentations are already available, and b) they appear to be restricted to accelerated gradient methods with no clear extension to the broader theory of computational complexity.

Non-deterministic polynomial-time (NP)-hard and -complete complexity classes can be traced to seminal work by Cook in 1971~\cite{Cit:cook}. The broad applicability of this work was outlined in a highly influential publication by Karp~\cite{Cit:karp}. NP-hard problems such as the traveling salesman problem (TSP)~\cite{Cit:TSP} and lattice-based vector problems~\cite{Cit:lattice} arise in a wide variety of applications ranging from DNA sequencing and astronomy~\cite{Cit:TSP_book} to encryption~\cite{Cit:lattice}. In essence, the computation of optimal solutions for these problems quickly becomes intractable with the size of the instance (unlike problems that lie in the $P$ complexity class). Note that some problems such as graph isomorphism~\cite{Cit:GI_klus} lie in the NP complexity class but are not expected to be NP-complete or NP-hard. Over the last few years, several efficient heuristic algorithms for approximating the solutions of NP-hard problems have been developed. For example, careful implementations of the Lin-Kernighan~\cite{Cit:LKH} and branch-and-bound~\cite{Cit:Concorde} heuristics have been successful in computing optimal solutions of several large instances of the TSP. However, most NP-hard problems suffer from a lack of scalable approaches. Moreover, as long as $P\neq NP$ (where $P$ is the complexity class of problems that can be solved in poylnomial time on a deterministic Turing machine), even efficient heuristics for some of these problems will remain elusive. See Fig.~\ref{fig:complexity_map} for the hypothesized relationship between the most popular classes.
\begin{figure}
    \centering
    \includegraphics[width=0.65\textwidth]{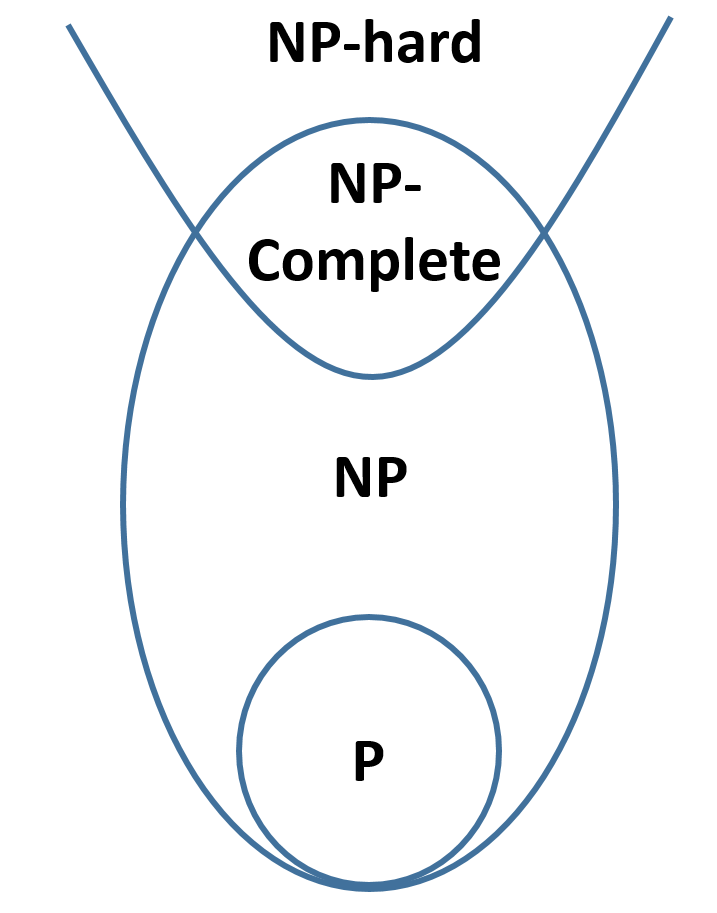}
    \caption{The computational complexity map for the most popular complexity classes.}
    \label{fig:complexity_map}
\end{figure}

In this work, we start by surveying the use of dynamical systems in the context of constructing state-of-the-art algorithms for NP-hard problems. In particular, we will cover the use of dynamical systems theory for constructing decentralized graph clustering algorithms~\cite{Cit:cluster1, Cit:cluster2}, solutions for the TSP~\cite{Cit:TSP_sahai}, and quantum-inspired networks of Duffing oscillators for solving the MAX-CUT problem~\cite{Cit:quantum_net}. We then switch to the use of dynamical systems theory for analysis of algorithms~\cite{Cit:koopman} and the underlying problems~\cite{Cit:zoltan,Cit:zoltan2}.

The goal of this survey paper is to highlight the potential application of dynamical systems theory for optimization of complex functions and analysis of computational complexity theory. This is a nascent field which presents the possibility of tremendous impact. Additionally, we expect this area to lead to new theoretical developments in nonlinear dynamics theory and novel algorithms for computationally intractable problems.

Ziessler, Surana, Speranzon, Klus, Dellnitz, and Banaszuk have all served as co-authors in my efforts in this area. However, my extensive discussions with Prof. Michael Dellnitz inspired me to delve deeper into the area of dynamical systems and the analysis of algorithms!

\section{Novel algorithm construction: decentralized graph clustering}
\label{sec:graph}
\begin{overview}{Overview}
Algorithms for graph analysis have a wide variety of applications such as routing, pattern recognition, database searches, network layout, and Internet PageRank to name a few~\cite{Cit:graph_algos}. Although some of these problems can solved efficiently on present day computing devices, several graph analysis problems are computationally intractable~\cite{Cit:cormen}. For example, the problem of partitioning graphs into equal size sets while minimizing the weights of cut edges arises in a range of settings such as social
anthropology, gene networks,
protein sequences, sensor
networks, computer
graphics, and Internet routing
algorithms~\cite{Cit:cluster2}.  To avoid unbalanced
cuts, size restrictions are typically placed on the clusters; instead of minimizing inter-connection strength, if one
minimizes the ratio of the inter-connection strength to the size
of individual clusters, the problem becomes
NP-complete~\cite{Cit:tutorial,Cit:npcomp}. 

In~\cite{Cit:cluster1,Cit:cluster2}, a novel decentralized algorithm for clustering/partitioning graphs that exploits fundamental properties of a dynamically evolving networked system was constructed. In particular, by propagating waves in a graph, one can compute partitions or clusters in a completely decentralized setting. The method is orders of magnitude faster than existing approaches~\cite{Cit:kempe}. This is our first example of a dynamical systems theory based algorithm for a combinatorial optimization problem. We now discuss the details of the approach.
\end{overview}
Let $\mathcal{G}=(V,E)$ be a graph with vertex set~$V =
\{1,\dots,N\}$ and edge set $E\subseteq V\times V$, where a
weight~$\W_{ij} \geq 0$ is associated with each edge $(i,j)\in
E$, and $\W$ is the $N\times N$ weighted adjacency matrix
of~$\mathcal{G}$. We assume that $\W_{ij}=0$ if and only if
$(i,j) \notin E$. The (normalized) graph Laplacian is defined
as,
\begin{align}
    \L_{ij} = \begin{cases}
                1 & \mbox{if}\: i = j\\
                -\W_{ij}/\sum_{\ell=1}^N \W_{i\ell} & \mbox{if}\: (i,j) \in E\\
                0   & \mbox{otherwise}\,,
             \end{cases}
             \label{eq:ldef}
\end{align}
or equivalently, $\L = \I-\D^{-1}\W$ where $\D$ is the
diagonal matrix with the row sums of $\W$.

Note that in~\cite{Cit:cluster2}, only undirected graphs were considered. The
smallest eigenvalue of the Laplacian matrix is $\lambda_1 = 0$,
with an associated eigenvector
$\v^{(1)}=\1=\left[1,1,\dots,1\right]^T$. Eigenvalues of~$\L$
can be ordered as, $ 0 = \lambda_1 \leq \lambda_2 \leq \lambda_3
\leq \cdots \leq \lambda_N$ with associated eigenvectors $\1,
\v^{(2)}, \v^{(3)}\cdots \v^{(N)}$~\cite{Cit:tutorial}. It is well
known that the multiplicity of~$\lambda_1$ is the number of
connected components in the graph~\cite{Cit:tutorial}. 

Given the Laplacian matrix~$\L$, associated with a
graph~$\mathcal{G}= (V,E)$, spectral clustering divides
$\mathcal{G}$ into two clusters by computing the signs of the~$N$
elements of the second eigenvector~$\v^{(2)}$, or Fiedler
vector. For further details about the computation of two or more clusters see~\cite{Cit:tutorial}. 

There are many algorithms to compute eigenvectors, such as the
Lanczos method or orthogonal iteration~\cite{GolubVanLoan96}.
Although some of these methods are distributable, convergence is
slow~\cite{GolubVanLoan96} and these algorithms do not
consider/take advantage of the fact that the matrix for which
the eigenvalues and eigenvectors need to be computed is the
adjacency matrix of the underlying graph.
In~\cite{Cit:kempe}, the authors propose an algorithm to
compute the first~$k$ largest eigenvectors (associated with the
first~$k$ eigenvalues with greatest absolute value)\footnote{
Note that in the case of spectral clustering we desire to
compute the smallest $k$ eigenvectors of~$\L$. The algorithm is
still applicable if we consider the matrix~$\I-\L$.} of a
symmetric matrix. The algorithm in~\cite{Cit:kempe}
emulates the behavior of orthogonal iteration using a decentralized process based on gossip algorithms or deterministic random walks on graphs. This approach can be slow as it converges after $O(\tau\log^2 N)$
iterations~\cite{Cit:kempe} where  where~$\tau$ is the mixing time for the
random walk on the graph and $N$ is the number of nodes. 

This procedure is equivalent
to evolving the discretized heat equation on the graph and can
be demonstrated as follows. The heat equation is given by,
\begin{align*}
    \frac{\partial u}{\partial t} = \Delta u\,,
\end{align*}
where~$u$ is a function of time and space,~$\partial u/\partial
t$ is the partial derivative of~$u$ with respect to time,
and~$\Delta$ is the Laplace operator~\cite{Cit:cluster2}.

When the
above equation is discretized on a graph
$\mathcal{G}=(V,E)$ one gets the following equation:
\begin{align*}
    \u_{i}(t+1) = \u_{i}(t) - \displaystyle\sum_{j\in\mathcal{N}(i)}\L_{ij}\u_{j}(t)\,,
\end{align*}
for~$i,j \in V$. Here~$\u_{i}(t)$ is the scalar value of~$u$ on
node~$i$ at time~$t$ and $\mathcal{N}(i)$ are the neighbors of node $i$ in the graph. The graph Laplacian $\L=[\L_{ij}]$
is the discrete counterpart of the $\Delta$
operator. The above iteration can be re-written,
in matrix form, $\u(t+1) = (\I-\L)\,\u(t)$ where~$\u(t) =
(\u_{1}(t),\dots,\u_{N}(t))^{T}$. The solution of this
iteration is,
\begin{align}
    \u(t) = C_0\1 + C_1 (1-\lambda_2)^t \v^{(2)} + \dots
    +C_N (1-\lambda_N)^t
    \v^{(N)}\,,
    \label{eq:HeatSol}
\end{align}
where constants~$C_{j}$ depend on the initial condition~$\u(0)$.
It is interesting to note that in Eqn.~\ref{eq:HeatSol}, the
dependence of the solution on higher eigenvectors and
eigenvalues of the Laplacian decays with increasing iteration
count. Thus, it is difficult to devise a fast and distributed
method for clustering graphs based on the heat equation. 

In~\cite{Cit:cluster1,Cit:cluster2},
a novel algorithm based on the idea of permanent
excitation of the eigenvectors of~$\I-\L$ using dynamical systems theory is constructed. In a theme similar to Mark Kac's question ``Can one hear the
shape of a drum?''~\cite{DrumShape}, it was demonstrated that by
evolving the wave equation in the graph, nodes can ``hear'' the
eigenvectors of the graph Laplacian using only local
information. Moreover, it was shown, both theoretically and on
examples, that the wave equation based algorithm is orders of
magnitude faster than random walk based approaches for graphs
with large mixing times. The overall idea of the wave equation
based approach is to simulate, in a distributed fashion, the
propagation of a wave through the graph and capture the
frequencies at which the graph ``resonates''. In other words, it was
shown that by using these frequencies one can compute the
eigenvectors of the Laplacian, thus clustering the graph.

The wave equation based clustering approach can be described as follows. 
Analogous to the heat equation case (Eq.~\ref{eq:HeatSol}), the
solution of the wave equation can be expanded in terms of the
eigenvectors of the Laplacian. However, unlike the heat equation
where the solution eventually converges to the first eigenvector
of the Laplacian, in the wave equation all the eigenvectors
remain eternally excited (a consequence of the
second derivative of $u$ with respect to time). This
observation is used to develop a simple, yet powerful, distributed
eigenvector computation algorithm. The algorithm involves
evolving the wave equation on the graph and then computing the
eigenvectors using local FFTs. The graph
decomposition/partitioning algorithm based on the discretized
wave equation on the graph, given by
\begin{align}
    \u_{i}(t) = 2\u_{i}(t-1) - \u_{i}(t-2) - c^2\displaystyle\sum_{j\in\mathcal{N}(i)}\L_{ij}
    \u_{j}(t-1)\,,
    \label{onenodewave}
\end{align}
where $\sum_{j\in\mathcal{N}(i)}\L_{ij}\u_{j}(t-1)$ originates
from the discretization of the spatial derivatives in the wave equation. The rest of the terms originate
from discretization of the $\partial^{2} u/\partial t^{2}$ term in the wave equation. To
update~$\u_{i}$ using Eq.~\ref{onenodewave}, one needs only the
value of~$\u_{j}$ at neighboring nodes and the connecting edge
weights (along with previous values of~$\u_{i}$).

The main steps of the algorithm are shown as
Algorithm~\ref{alg:WaveAlg}. Note that at each node (node~$i$ in
the algorithm) one only needs nearest neighbor weights~$\L_{ij}$
and the scalar quantities $\u_{j}(t-1)$ also at nearest
neighbors. We emphasize, again, that $\u_{i}(t)$ is a scalar
quantity and \texttt{Random}($[0,1]$) is a random initial
condition on the interval $[0,1]$. The vector~$\v^{(j)}_{i}$ is
the $i$-th component of the $j$-th eigenvector, $T_{max}$ is a
positive integer derived in~\cite{Cit:cluster1,Cit:cluster2},
$\texttt{FrequencyPeak(Y,j)}$ returns the frequency at which the
$j$-th peak occurs and $\texttt{Coefficient}(\omega_{j})$ return the
corresponding Fourier coefficient.

\begin{algorithm}[th!]
    \caption{Wave equation based eigenvector computation algorithm for node~$i$. At node~$i$ one computes the sign of the~$i$-th component of the first~$k$ eigenvectors. The cluster assignment is obtained by interpreting the vector of $k$ signs as a binary number.}\label{alg:WaveAlg}
    \begin{algorithmic}[1]
    \newcounter{ALC@line}
    \setcounter{ALC@line}{1}
    \State $\u_{i}(0) \leftarrow \texttt{Random}\:([0,1])$
    \setcounter{ALC@line}{2}
    \State $\u_{i}(-1) \leftarrow \u_{i}(0)$
    \setcounter{ALC@line}{3}
    \State $t\leftarrow 1$
    \setcounter{ALC@line}{4}
    \While{$t < T_{max}$}
    \setcounter{ALC@line}{5}
         \State \begin{tabular}{l}
         $\u_{i}(t) \leftarrow 2\u_{i}(t-1)-\u_{i}(t-2) -$\\
         $\qquad \qquad c^2 \sum_{j\in\mathcal{N}(i)}\L_{ij}\u_{j}(t-1)$
         \end{tabular}
         \setcounter{ALC@line}{6}
         \State $t\leftarrow t+1$
         \setcounter{ALC@line}{7}
    \EndWhile
    \setcounter{ALC@line}{8}
    \State $Y\leftarrow \texttt{FFT}\:(\left[\u_{i}(1),\dots\dots,\u_{i}(T_{max})\right])$
    \setcounter{ALC@line}{9}
    \For{$j \in \{1,\dots,k\}$}
    \setcounter{ALC@line}{10}
        \State $\omega_{j} \leftarrow
        \texttt{FrequencyPeak}\:(Y,j)$
         \setcounter{ALC@line}{11}
        \State $\v^{(j)}_{i} \leftarrow \texttt{Coefficient}(\omega_{j})$ 
         \setcounter{ALC@line}{12}
        \If {$\v^{(j)}_{i} > 0$}
        \setcounter{ALC@line}{13}
            \State $A_j \leftarrow 1$
            \setcounter{ALC@line}{14}
        \Else
        \setcounter{ALC@line}{15}
            \State $A_j \leftarrow 0$
            \setcounter{ALC@line}{16}
        \EndIf
        \setcounter{ALC@line}{17}
        \EndFor
        \setcounter{ALC@line}{18}
    \State ClusterNumber $\leftarrow \sum_{j=1}^k A_j 2^{j-1}$
\end{algorithmic}
\end{algorithm}

\begin{proposition}
The clusters of graph~$\mathcal{G}$, determined by the signs of
the elements of the eigenvectors of~$\L$, can be computed using
the frequencies and coefficients obtained from the Fast Fourier
Transform of $(\u_{i}(1),\dots,\u_{i}(T_{max}))$, for all~$i$
and some~$T_{max}>0$. Here~$\u_i$ is governed by the wave
equation on the graph (shown in Eqn.~\ref{onenodewave}) with the initial condition~$\u(-1)=\u(0)$
and $0<c < \sqrt{2}$.
\end{proposition}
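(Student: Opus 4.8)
The plan is to pass to the eigenbasis of~$\L$ and reduce the coupled vector recurrence to a family of decoupled scalar recurrences, one per eigenmode. Writing the update of Eqn.~\ref{onenodewave} in matrix form as $\u(t) = (2\I - c^2\L)\,\u(t-1) - \u(t-2)$, I would expand the state in the eigenvectors of~$\L$, $\u(t) = \sum_{k=1}^N a_k(t)\,\v^{(k)}$. Since $\L\v^{(k)} = \lambda_k \v^{(k)}$, each modal amplitude obeys the scalar linear recurrence $a_k(t) = (2 - c^2\lambda_k)\,a_k(t-1) - a_k(t-2)$. Here I would note that although $\L = \I-\D^{-1}\W$ is not symmetric, it is similar (via~$\D^{1/2}$) to the symmetric normalized Laplacian, so it is diagonalizable with real eigenvalues and the expansion is legitimate.

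Next I would solve each scalar recurrence through its characteristic equation $z^2 - (2 - c^2\lambda_k)z + 1 = 0$. The key step is to combine the hypothesis $0<c<\sqrt{2}$ with the fact that the normalized-Laplacian eigenvalues satisfy $\lambda_k \in [0,2]$: these give $c^2\lambda_k < 4$, hence $|2 - c^2\lambda_k| < 2$ and a negative discriminant, so the two roots are complex conjugates on the unit circle, $z = e^{\pm i\omega_k}$ with $\omega_k = \arccos\!\left(1 - \frac{c^2\lambda_k}{2}\right) \in [0,\pi)$. Thus every mode is purely oscillatory and, crucially, $\omega_k$ is a strictly increasing function of~$\lambda_k$, so distinct eigenvalues produce distinct frequencies. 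This is exactly where the constraint on~$c$ is needed: at $c=\sqrt{2}$ and $\lambda_k = 2$ the characteristic equation acquires the double root~$-1$, producing an unbounded, non-oscillatory $t$-linear term that would destroy the spectral picture.

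I would then impose the initial condition $\u(-1)=\u(0)$ modewise. Writing $a_k(t) = A_k\cos(\omega_k t + \phi_k)$, the condition $a_k(-1) = a_k(0)$ forces $\phi_k = \omega_k/2$, and since $\omega_k < \pi$ one has $\cos(\omega_k/2) > 0$, so the amplitude $A_k = a_k(0)/\cos(\omega_k/2)$ is a well-defined, node-independent scalar with $\sgn(A_k) = \sgn(a_k(0))$. Substituting back yields the node-$i$ time series
\begin{align*}
    \u_i(t) = \sum_{k=1}^N A_k \cos\!\Big(\omega_k\big(t + \tfrac{1}{2}\big)\Big)\,\v^{(k)}_i\,.
\end{align*}
The discrete Fourier transform of $(\u_i(1),\dots,\u_i(T_{max}))$ therefore exhibits a peak at each frequency~$\omega_k$, and the corresponding Fourier coefficient equals $A_k\,\v^{(k)}_i$ up to the same factor for every node. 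Because $A_k$ carries no dependence on~$i$, for a fixed mode~$k$ the sign of the coefficient read at node~$i$ equals $\sgn(A_k)\,\sgn(\v^{(k)}_i)$; comparing these signs across all nodes reproduces the sign pattern of~$\v^{(k)}$ up to a single global flip, which merely relabels the two clusters. This is precisely the quantity extracted in Algorithm~\ref{alg:WaveAlg}.

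The main obstacle I anticipate is not the algebra above but the genericity and resolution issues that make the FFT read-off valid. First, one must argue that every mode is actually excited, i.e.\ $a_k(0) \neq 0$ for all~$k$; this is why $\u(0)$ is chosen at random, and I would show that the set of initial conditions with some vanishing modal projection is a finite union of hyperplanes, hence of measure zero. Second, I would connect the finite window length to frequency resolution: to separate the peaks at the~$\omega_k$ in a length-$T_{max}$ transform one needs $T_{max}$ larger than roughly the reciprocal of the minimal gap $\min_k |\omega_{k+1}-\omega_k|$, which is exactly the role played by the $T_{max}$ derived in~\cite{Cit:cluster1,Cit:cluster2}. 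A final care point is repeated eigenvalues, where the single peak at the shared frequency carries a coefficient proportional to a fixed vector of the corresponding eigenspace; this still yields a valid sign-based bipartition but should be acknowledged explicitly.
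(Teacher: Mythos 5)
Your overall strategy---writing the iteration as $\u(t) = (2\I - c^2\L)\,\u(t-1) - \u(t-2)$, decoupling it in the eigenbasis of $\L$ (justified via similarity to the symmetric normalized Laplacian), solving each modal recurrence through its characteristic roots $e^{\pm i\omega_k}$ with $\omega_k = \arccos(1 - c^2\lambda_k/2)$, fixing the phases with $\u(-1)=\u(0)$, and reading $\sgn(\v^{(k)}_i)$ off the FFT peaks---is essentially the route taken in \cite{Cit:cluster1,Cit:cluster2}, which is all the paper itself offers (it defers the proof to those references). Your points about genericity of the random initial condition, the resolution requirement linking $T_{max}$ to the minimal frequency gap, and repeated eigenvalues likewise match the concerns treated there.

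There is, however, one genuine flaw in the argument as written: your inference ``$c^2\lambda_k < 4$, hence $|2-c^2\lambda_k|<2$'' fails for the eigenvalue $\lambda_1 = 0$, which every graph Laplacian possesses. For that mode the discriminant is exactly zero, the characteristic equation $z^2 - 2z + 1 = 0$ has the double root $z=1$, and the general solution is $a_1(t) = C_1 + D_1 t$, not a pure cosine. Your ansatz $a_k(t) = A_k\cos(\omega_k t + \phi_k)$ silently discards the secular term $D_1 t$, and imposing $a_1(-1)=a_1(0)$ on that ansatz is vacuous, so your proof never establishes $D_1 = 0$. This is not cosmetic: a linear ramp in the time series has broadband spectral content that would contaminate every peak of the finite-window FFT, undermining precisely the sign read-off you need. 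The repair is exactly the reason the hypothesis $\u(-1)=\u(0)$ appears in the proposition: applied to the true general solution of the zero mode it gives $C_1 - D_1 = C_1$, i.e., $D_1 = 0$, leaving only a harmless DC component (the same remark applies to each copy of the zero eigenvalue if the graph is disconnected). With that one-line correction---treat $\lambda_1 = 0$ (and, at the other extreme, note that $\lambda_k = 2$ with $c = \sqrt{2}$ is excluded by the strict inequality on $c$) as a separate case---your argument is complete and coincides with the published proof.
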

\begin{proof}
For the proofs see~\cite{Cit:cluster1,Cit:cluster2}.
\end{proof}

The above proof demonstrates that the approach is fundamentally \emph{decentralized}. Moreover, it is shown in~\cite{Cit:cluster1,Cit:cluster2} that the convergence of the wave equation based eigenvector
computation depends on the mixing time of the underlying Markov
chain on the graph, and is given by,
\begin{equation}
    T_{max} = O\left(\arccos\left(\cfrac{2+c^2 (e^{-1/\tau}-1)}{2}\right)^{-1} \right) +
    O(N)\,, \label{eq:Waveconvfinal}
\end{equation}
where $\tau$ is the mixing time of the Markov chain. Thus, the wave equation based algorithm has better scaling with~$\tau$ for graphs of any size (given by $N$, see Fig.~\ref{ConvComp}).

\begin{figure}[t!]
  \centering
  \includegraphics[width=0.85\hsize]{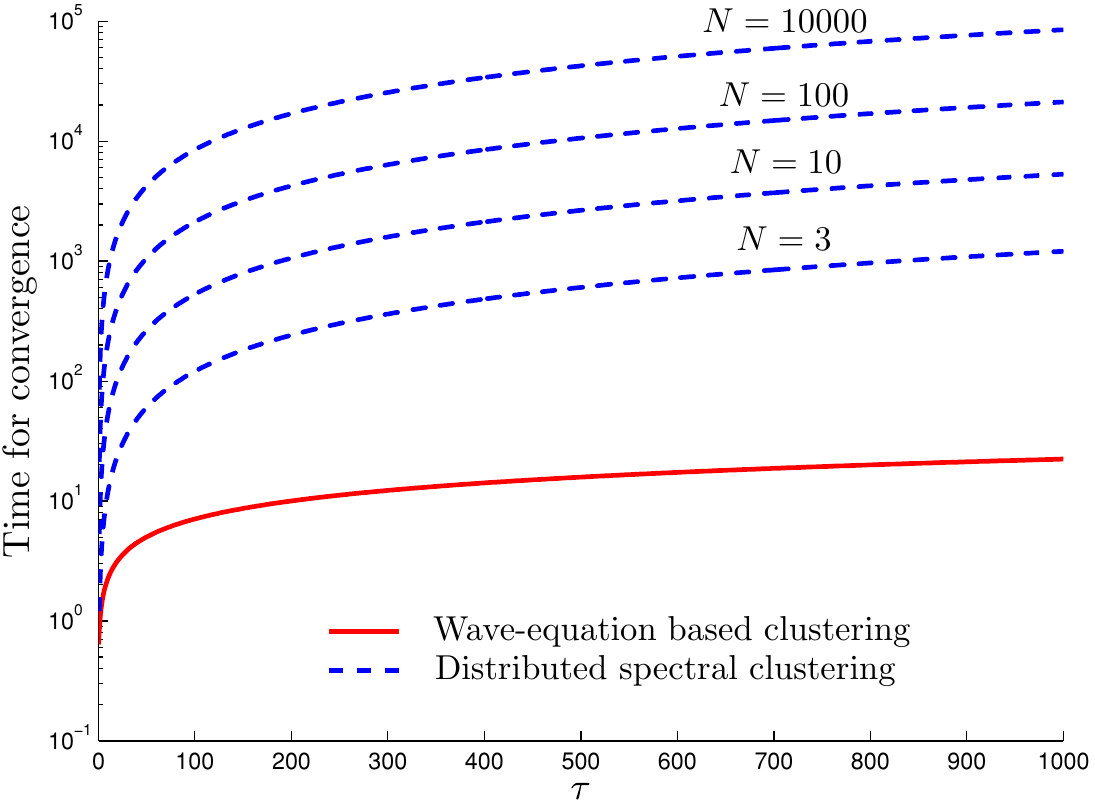}\\
  \caption{Comparison of convergence rates between the distributed algorithm in~\cite{Cit:kempe} and our proposed wave equation
  algorithm for~$c^{2}=1.99$. The wave equation based algorithm has better scaling with~$\tau$ for graphs of any size (given by $N$). The plots are upper bounds on the convergence speed. For more details see~\cite{Cit:cluster2}.\label{ConvComp}}
\end{figure}

The above work is an example of the construction of a state-of-the-art algorithm using dynamical systems theory. This work has also found application in distributed numerical computations~\cite{Cit:Num_klus} and uncertainty quantification~\cite{Cit:UQ_surana}. We now present another example of constructing novel algorithms for  NP-hard problems using the theory of nonlinear dynamics and invariant manifold computations.

\section{Novel algorithm construction: invariant manifolds and the traveling salesman problem}
\label{sec:tsp}
\begin{overview}{Overview}
Recently, dynamical systems theory was used to construct novel algorithms for another iconic NP-hard problem~\cite{Cit:TSP_sahai}. The traveling salesman problem (TSP) has a long and rich history in the areas of computer science, optimization theory, and computational complexity, and has
received decades of interest~\cite{Cit:cook}. This combinatorial optimization problem arises in a wide
variety of applications related to genome map
construction, telescope management, and drilling circuit boards.
The TSP also naturally occurs in applications related to target
tracking~\cite{Cit:target_tracking}, vehicle
routing, and communication networks to name a few. We refer the reader to~\cite{Cit:cook,Cit:TSP_sahai} for further details.

In its basic form, the statement of the TSP is
exceedingly simple. The task is to find the shortest Hamiltonian circuit
through a list of cities, given their pairwise distances. Despite its
simplistic appearance, the underlying problem is NP-hard~\cite{Cit:karp}.
Several heuristics have been developed over the years to solve the problem~\cite{Cit:cook}
including ant colony optimization, cutting plane
methods, Christofides heuristic
algorithm, and the Lin--Kernighan
heuristic. 

In~\cite{Cit:TSP_sahai}, inspired by dynamical systems theory, the authors construct novel orthogonal relaxation based approximations to the TSP. In particular, the constructed dynamical system captures the flow on the manifold of orthogonal matrices and ideally converges to a permutation matrix that minimizes the tour length. However, in general, the flow typically converges to local minima that are not competitive when compared to state-of-the-art heuristics. Inspired by this continuous relaxation, the authors compute the solution to a two-sided orthogonal Procrustes problem~\cite{Cit:Procrustes-book} that relaxes the TSP to the manifold of orthogonal matrices. They then combine the Procrustes approach with the Lin--Kernighan heuristic~\cite{Cit:LKH} for computing solutions of the TSP. Additionally, the authors use set-oriented methods to study the stability of optimal solutions and their stable manifolds, thereby providing insight into the associated basins of attraction and the resulting computational complexity of the problem.
\end{overview}

Given a list of $ n $ cities $ \{ C_{1}, C_{2}, \dots, C_{n} \} $ and the associated distances between cities $ C_i $ and $ C_j $, denoted by $ d_{ij} $, the TSP aims to find an ordering $ \sigma $ of $ \{ 1, 2, \dots, n \} $ such that the tour cost, given by
\begin{equation} \label{eq:basic_cost}
    c = \sum_{i=1}^{n-1} d_{\sigma(i), \sigma(i+1)} + d_{\sigma(n), \sigma(1)},
\end{equation}
is minimized. For the Euclidean TSP, for instance, $ d_{ij} = ||x_i - x_j||_2 $, where $ x_i \in \R^d $ is the position of $ C_i $. In general, however, the distance matrix $ D = (d_{ij}) $ does not have to be symmetric (for example see~\cite{Cit:tsp_asymm}). The ordering $ \sigma $ can be represented as a unique permutation matrix $ P $. Note, however, that due to the underlying cyclic symmetry, multiple orderings -- corresponding to different permutation matrices -- have the same cost.

There are several equivalent ways to define the cost function of the TSP. The authors restrict themselves to the trace\footnote{The trace of a matrix $ A \in \R^{n \times n} $ is defined to be the sum of all diagonal entries, i.e., $ \mbox{tr}(A) = \sum_{i=1}^n a_{ii} $.} formulation. Let $ \mathcal{P}_n $ denote the set of all $ n \times n $ permutation matrices, then the TSP can be written as a combinatorial optimization problem of the form
\begin{equation} \label{eq:TSPCost}
    \min_{P \in \mathcal{P}_n} \mbox{tr} \left( A^T P^T B P \right),
\end{equation}
where $ A = D $ and $ B = T $. Here, $ T $ is defined to be the adjacency matrix of the cycle graph of length $ n $.

One uses the undirected cycle graph adjacency matrix for symmetric TSPs and the one corresponding to the directed cycle graphs for asymmetric TSPs. The matrices are defined as,
\begin{equation*} \label{eq:tmatrix}
    T_\text{dir} = \begin{pmatrix}
        0 & 1 &   &  &   \\
          & 0 & 1 &  &   \\
          &   & \ddots & \ddots &  \\
          &   &   & 0  & 1 \\
        1 &   &   &  & 0
    \end{pmatrix} \; \text{ or } \;
    T_\text{undir} = \begin{pmatrix}
        0 & 1 &   &  & 1 \\
        1 & 0 & 1 &  &   \\
          & \ddots  & \ddots & \ddots &  \\
          &   & 1 & 0  & 1 \\
        1 &   &   & 1 & 0
    \end{pmatrix}.
\end{equation*}
The work in~\cite{Cit:TSP_sahai} focuses on the undirected version of the TSP. By relaxing the TSP problem to the manifold of orthogonal matrices (since permutation matrices are orthogonal matrices restricted to $0$ or $1$ entries), one can use the two sided Procrustes problem to solve the problem exactly, as outlined in the theorem below.

\begin{theorem}
Given two symmetric matrices $ A $ and $ B $, whose eigenvalues are distinct, let $ {A = V_A \Lambda_A V_A^T} $ and $ B = V_B \Lambda_B V_B^T $ be eigendecompositions, with $ \Lambda_A = \diag\left(\lambda_{A}^{(1)}, \dots, \lambda_{A}^{(n)}\right) $, $ \Lambda_B = \diag\left(\lambda_{B}^{(1)}, \dots, \lambda_{B}^{(n)}\right) $, and $ \lambda_{A}^{(1)} \geq \dots \geq \lambda_{A}^{(n)} $ as well as $ \lambda_{B}^{(1)} \geq \dots \geq \lambda_{B}^{(n)} $. Then every orthogonal matrix $ P^* $ which minimizes  
\begin{equation}
\min_{P \in \mathcal{O}_n} || A - P^T B P ||_F
\label{eq:RelTSPCost}
\end{equation} 
has the form
\begin{equation*}
    P^* = V_B S V_A^T,
\end{equation*}
where $ S = \diag(\pm 1, \dots, \pm 1) $.
\end{theorem}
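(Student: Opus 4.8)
The plan is to convert the Frobenius-norm minimization into a trace maximization, diagonalize both matrices using the given eigendecompositions, and then exploit the combinatorial structure of the resulting problem through the Birkhoff--von Neumann theorem and the rearrangement inequality. First I would expand the squared objective as $\|A - P^T B P\|_F^2 = \|A\|_F^2 - 2\,\trace(A^T P^T B P) + \|P^T B P\|_F^2$. Since the Frobenius norm is invariant under orthogonal conjugation, $\|P^T B P\|_F = \|B\|_F$ for every $P \in \mathcal{O}_n$, and since $A$ is symmetric the cross term is $2\,\trace(A P^T B P)$. Hence minimizing over $\mathcal{O}_n$ is equivalent to maximizing $\trace(A P^T B P)$, which also recovers the combinatorial cost of Eqn.~\ref{eq:TSPCost}.

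Next I would substitute the eigendecompositions and introduce the change of variables $Q = V_B^T P V_A$, which ranges over all of $\mathcal{O}_n$ as $P$ does. Using cyclicity of the trace and orthogonality of $V_A$ and $V_B$, the objective collapses to $\trace(\Lambda_A Q^T \Lambda_B Q) = \sum_{j,k} \lambda_B^{(j)} \lambda_A^{(k)} Q_{jk}^2$. The key observation is that the matrix $M$ with entries $M_{jk} = Q_{jk}^2$ is doubly stochastic, since the rows and the columns of an orthogonal matrix are unit vectors, so each sums to one.

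Finally I would maximize the linear functional $M \mapsto \sum_{j,k} \lambda_B^{(j)} \lambda_A^{(k)} M_{jk}$ over the Birkhoff polytope of doubly stochastic matrices. A linear functional on a polytope attains its maximum at a vertex, and the vertices are exactly the permutation matrices (Birkhoff--von Neumann). On a permutation $\pi$ the functional equals $\sum_j \lambda_B^{(j)} \lambda_A^{(\pi(j))}$, which the rearrangement inequality maximizes at $\pi = \mathrm{id}$, because both eigenvalue lists are sorted in decreasing order. Identifying the optimal $M$ with the identity forces $Q_{jk}^2 = \delta_{jk}$, i.e. $Q = S = \diag(\pm 1, \dots, \pm 1)$, and inverting $Q = V_B^T P V_A$ yields $P^* = V_B S V_A^T$.

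The hard part, and where the distinct-eigenvalue hypothesis is essential, is the last step: the squared-entry matrices $(Q_{jk}^2)$ fill out only the \emph{orthostochastic} matrices, a non-convex proper subset of the Birkhoff polytope, so I cannot invoke convexity of the feasible set directly. Distinctness of the eigenvalues makes the rearrangement inequality strict, so the identity is the \emph{unique} optimal permutation and the optimal face of the polytope collapses to the single vertex $I$. Since that vertex is itself orthostochastic, the relaxation to the full polytope is tight and the optimizer is pinned down up to the sign choices recorded in $S$. Without the distinctness assumption the optimal face could be higher-dimensional, and the minimizer would no longer have the rigid form $V_B S V_A^T$.
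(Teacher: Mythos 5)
Your proof is correct, but note that the paper itself offers no argument for this theorem at all: it simply defers to Sch\"onemann's original work~\cite{Cit:Sch68} on the two-sided orthogonal Procrustes problem. Sch\"onemann's classical treatment is a stationarity analysis: one adjoins the orthogonality constraint via Lagrange multipliers, derives first-order conditions forcing the transformed variable $Q = V_B^T P V_A$ to be (under the distinct-eigenvalue hypothesis) a signed permutation matrix, and then ranks the finitely many candidates by their objective values. Your route is genuinely different and fully global: after the same reduction to maximizing $\trace\bigl(\Lambda_A Q^T \Lambda_B Q\bigr) = \sum_{j,k}\lambda_B^{(j)}\lambda_A^{(k)}Q_{jk}^2$, you relax the orthostochastic matrix $\bigl(Q_{jk}^2\bigr)$ to the Birkhoff polytope, invoke Birkhoff--von Neumann plus the strict rearrangement inequality to collapse the optimal face to the single vertex $I$, and then observe the relaxation is tight because $I$ is itself orthostochastic. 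What this buys is twofold: you never have to classify critical points or worry about second-order conditions (the stationarity route only produces candidates, which must still be compared), and the role of the distinct-eigenvalue hypothesis is isolated exactly where it matters --- uniqueness of the optimal vertex, without which the optimal face is higher-dimensional and minimizers need not be signed permutations in the eigenbases. Your explicit handling of the non-convexity of the orthostochastic set is precisely the step a careless version of this argument would gloss over. One small quibble on an aside: minimizing $\| A - P^T B P \|_F$ is equivalent to \emph{maximizing} $\trace(A P^T B P)$, whereas \eqref{eq:TSPCost} \emph{minimizes} the trace, so your remark that the reduction ``recovers'' the combinatorial cost has the sign backwards (a convention mismatch already latent in the surveyed formulation); it plays no role in your proof.
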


A proof of this theorem can be found in~\cite{Cit:Sch68}. If the eigenvalues of $ A $ and $ B $ are distinct, then there exist $ 2^n $ different solutions with the same cost. If one or both of the matrices possess repeated eigenvalues, then the eigenvectors in the matrices $V_A$ and $V_B$ are determined only up to basis rotations, which further increases the size of the solution space. The Procrustes problem is related to a dynamical system formulation of the TSP as outlined below.

The orthogonal relaxation of the combinatorial optimization problem \eqref{eq:TSPCost}, given by \eqref{eq:RelTSPCost}, can be solved using a steepest descent method on the manifold of orthogonal matrices. For more details about this formulation see~\cite{Cit:TSP_sahai}. One can pose the TSP as a constrained optimization problem of the form,
\begin{align}\label{eq:minPwithEqualityConstraints}
	\min_{P \in \mathcal{O}_n} & \; \mbox{tr} \left( A^T P^T B P \right), \\
	s.t. \; & \; G(P) = 0.
\end{align}
This formulation gives rise to the following set of equations,
\begin{equation}\label{eq:TSP flow}
\begin{aligned}
    \dot{P}       &= -P \left( \left\{ P^T B P, A \right\} + \left \{P^T B^T P, A^T \right\} \right)
- \lambda P \left( (P \circ P)^T P - P^T(P \circ P) \right), \\
\dot{\lambda} &= \frac{1}{3} \mbox{tr} \left( P^T \left( P - (P \circ P) \right) \right).
\end{aligned}
\end{equation}
The above set of equations are obtained by using gradient descent on the Lagrangian cost function.
\begin{example} \label{ex:P flow}
	In order to illustrate the gradient flow approach, let us consider a simple TSP with 10 cities. Using~\eqref{eq:TSP flow}, we obtain the results shown in Figure~\ref{fig:P flow}. In this example, the dynamical system converges to the optimal tour.
	
	\begin{figure}[!htb]
		\centering
		\begin{minipage}[c]{0.32\textwidth}
			\centering
			\includegraphics[width=0.8\textwidth]{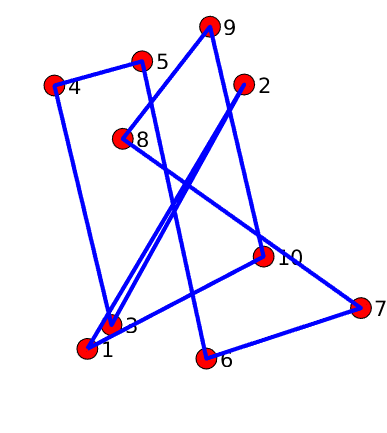}
		\end{minipage}
		\begin{minipage}[c]{0.32\textwidth}
			\centering
			\includegraphics[width=0.8\textwidth]{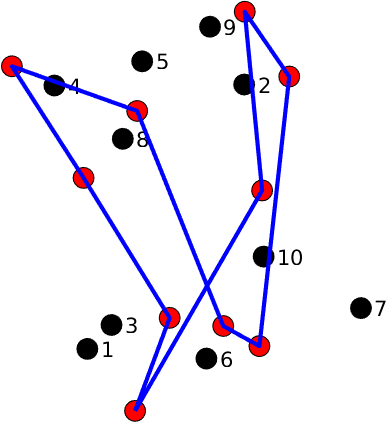}
		\end{minipage}
		\begin{minipage}[c]{0.32\textwidth}
			\centering
			\includegraphics[width=0.8\textwidth]{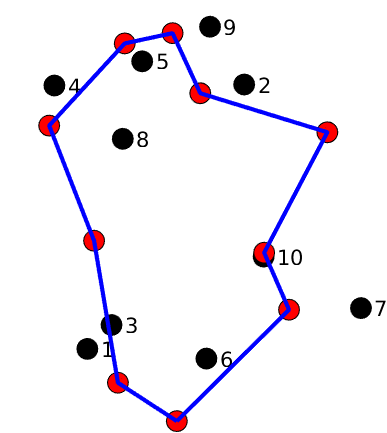}
		\end{minipage} \\[1em]
		\begin{minipage}[c]{0.32\textwidth}
			\centering
			\includegraphics[width=0.8\textwidth]{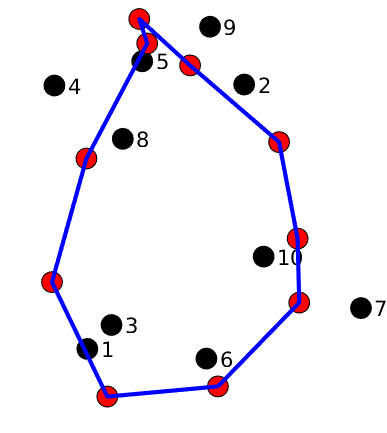}
		\end{minipage}
		\begin{minipage}[c]{0.32\textwidth}
			\centering
			\includegraphics[width=0.8\textwidth]{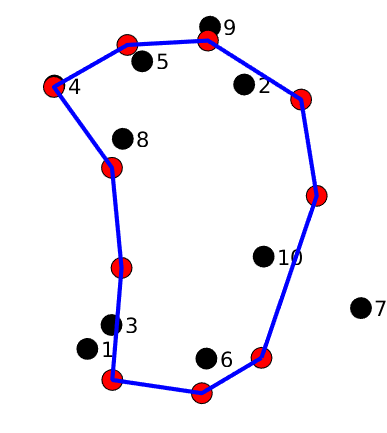}
		\end{minipage}
		\begin{minipage}[c]{0.32\textwidth}
			\centering
			\includegraphics[width=0.8\textwidth]{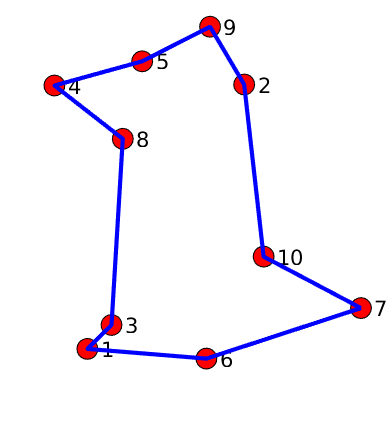}
		\end{minipage}
		\caption{Traveling salesman problem with 10 cities solved using the gradient flow~\eqref{eq:TSP flow}. The original positions of the cities are shown in black, the positions transformed by the orthogonal matrix $ P $ in red. a) Initial trivial tour given by $ \sigma = (1, \dots, 10) $. b--d) Intermediate solutions. e) Convergence to an orthogonal matrix which is ``close'' to a permutation matrix with respect to any matrix norm. f) Extraction of the corresponding permutation matrix. The initial tour was transformed into the optimal tour by the gradient flow.}
		\label{fig:P flow}
	\end{figure}
\end{example}

The dynamical system without constraints converges to equilibria that are given by the Procrustes solutions. To shed light on the stability and local dynamics around the optimal TSP solutions one can approximate \emph{subsets of the stable manifold} of the Procrustes solutions such that two permutation matrices are inside these sets. This numerical study enables the analysis of the robustness of Procrustes solutions under small perturbations of the initial permutation matrix and the assessment of the `closeness' the Procrustes solution is to the optimal permutation matrix. In order to compute the sets of interest, set-oriented continuation techniques developed in \cite{DH96} are used in~\cite{Cit:TSP_sahai}. An example computation is depicted in Fig.~\ref{fig:mostAttractingSets}.  

\begin{figure}[!htb]
	\centering
	\includegraphics[width=.8\textwidth]{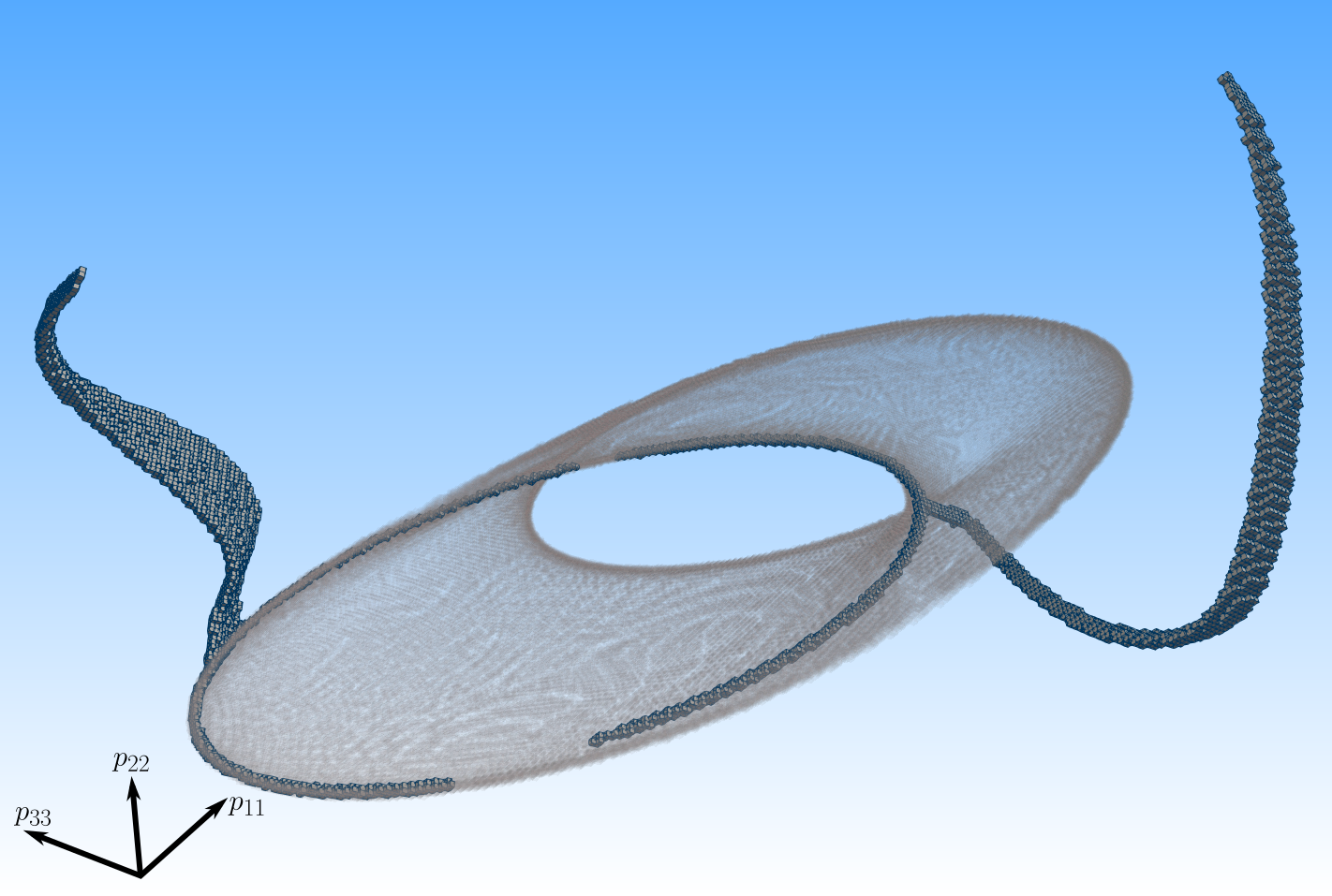}
	\caption{Three-dimensional projection of two subsets of the stable manifold. The omega-limit sets of a small neighborhood of the permutation matrices $P_1$ and $P_2$ form a half circle on their corresponding Procrustes set.}
	\label{fig:mostAttractingSets}
\end{figure}

Moreover, one can also use set oriented methods to compute basins of attraction of optimal permutation matrices for small instances of the TSP. These basins (subsets of the stable manifold) are computed by perturbing the optimal solutions and integrating the flow backward in time~\cite{Cit:TSP_sahai}. The solutions are shown in Fig.~\ref{fig:polar}. These computations are interesting and capture the ``hardness'' of the problem. In particular, one can see that the solutions of relaxed versions of the problem (such  as the relaxations to the manifolds of orthogonal matrices) do not, in general, lie in the basin of attraction of the optimal solutions of the original problem. Other such instances of analysis of relaxed solutions of the TSP using dynamical systems theory are outlined in~\cite{Cit:TSP_sahai}.

\begin{figure}[!htb]
	\begin{minipage}{0.49\textwidth}
		\includegraphics[width = \textwidth]{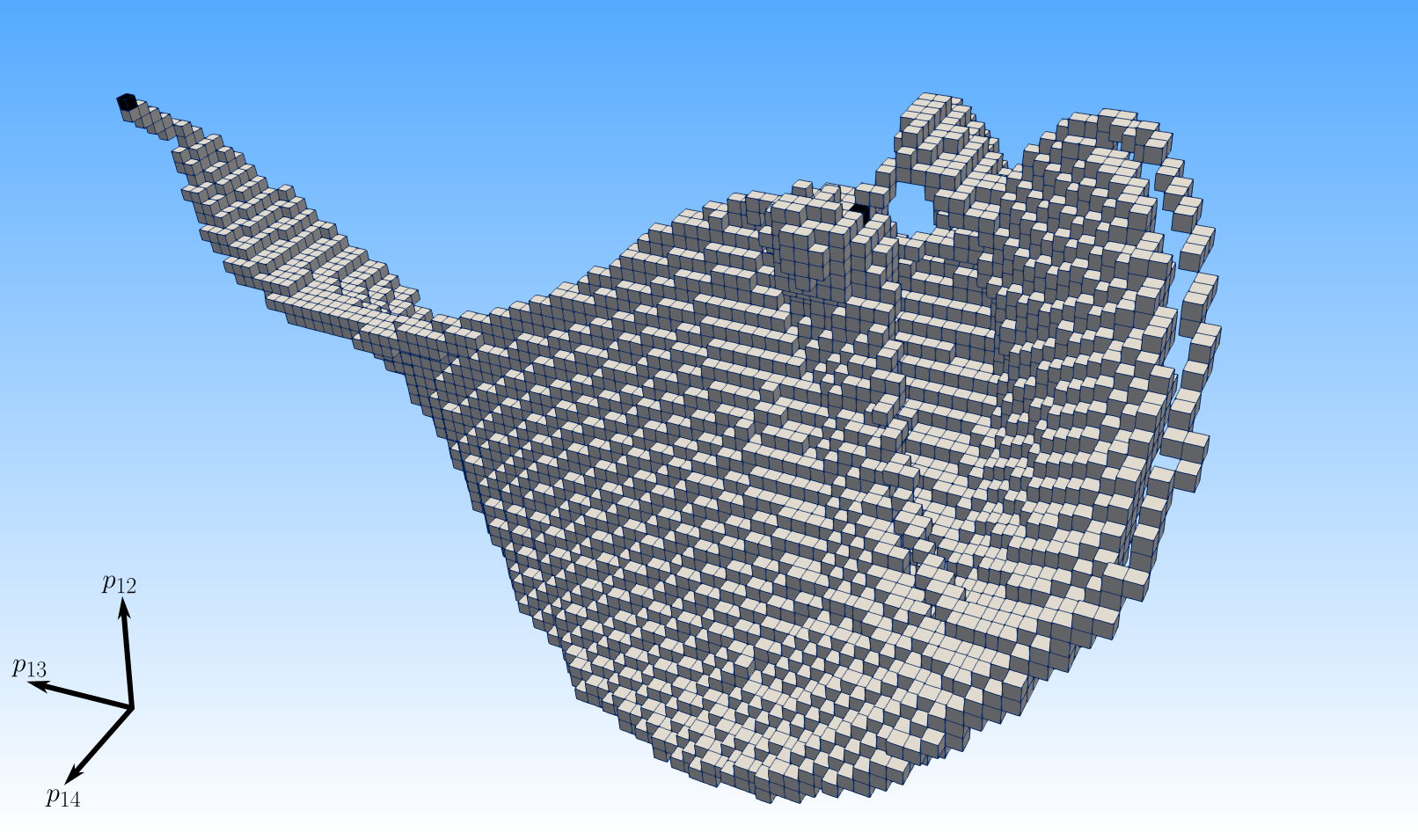}
		\centering \scriptsize{(a)}
	\end{minipage}
	\begin{minipage}{0.49\textwidth}
		\includegraphics[width = \textwidth]{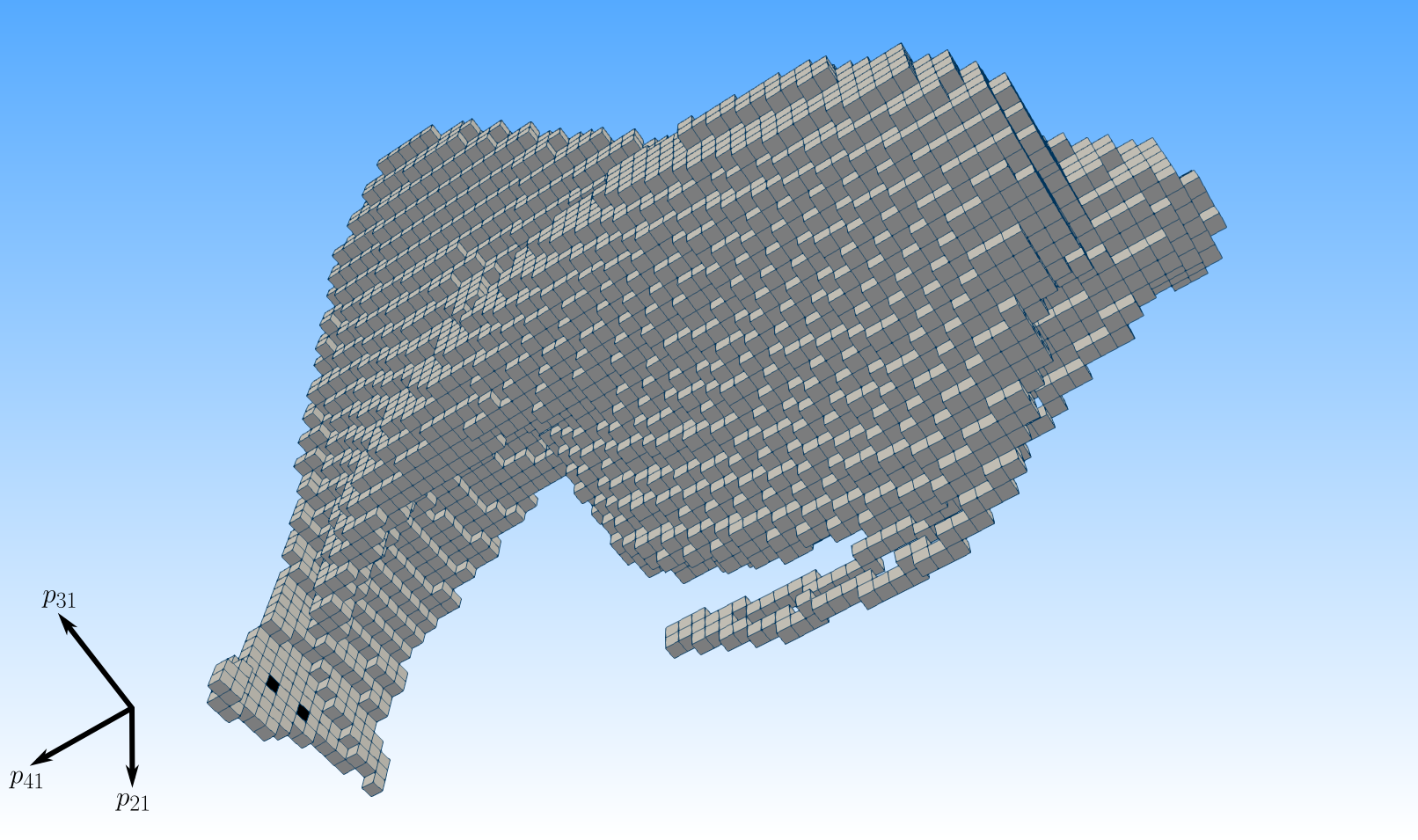}
		\centering \scriptsize{(b)}
	\end{minipage}

	\caption{(a)--(b) Three-dimensional projections of the basin of attraction of $(\widetilde P,\widetilde{\lambda})$. The dark cells depict the stationary solutions of the gradient flow \eqref{eq:TSP flow} backward in time.}
	\label{fig:polar}
\end{figure}

Although, dynamical systems theory demonstrates that the Procrustes solutions do not typically lie in the basin of attraction of the optimal solutions of the TSP, a new biasing scheme for the Lin--Kernighan heuristic is constructed using the aforementioned relaxation~\cite{Cit:TSP_sahai}.

The Lin--Kernighan heuristic is a popular heuristic for the TSP~\cite{Cit:LKH}. Starting from an initial tour, the approach progresses by extracting edges from the tour and replacing them with new edges, while maintaining the Hamiltonian cycle constraint. If $k$ edges in the tour are simultaneously replaced, this is known as the $k$-opt move~\cite{Cit:LKH}. To prune the search space, the algorithm relies on minimum spanning trees to identify edges that are more likely to be in the tour. This ``importance'' metric for edges is called $\alpha$-nearness and described in~\cite{Cit:LKH,Cit:TSP_sahai}. The algorithm has found great success on large instances of the TSP, see~\cite{Cit:cook} for more details. 

In~\cite{Cit:TSP_sahai}, the $\alpha$-nearness metric is replaced with a new Procrustes solution--based metric that prunes/identifies important potential edges to include in the ``candidate set list''. This list is then used to generate the $k$-opt moves. The metric is captured in Fig.~\ref{fig:distance_vs_procrustes}. The Procrustes solution tends to capture the longer edges that are important. To increase the inclusion of the short edges, the approach in~\cite{Cit:TSP_sahai} constructs a homotopy between the Procrustes ($P$-nearness) solution and the distance matrix. Using a graph Laplacian approach, the mixture of the two matrices is compared to the $\alpha$-nearness approach on 22 well-known instances of the TSP.  $ P $-nearness based LKH converges to lower cost values in $18$ of the instances when compared to $\alpha$-nearness based LKH. Moreover, for $50$ random TSP instances of size $1000$  (cities) it is found that $P$-nearness has lower tour costs after a fixed number of $k$-opt moves in $31$ of the instances, translating into an improvement for $62\%$ of the instances.

\begin{figure}[htb]
    \centering
    \includegraphics[width=0.95\textwidth]{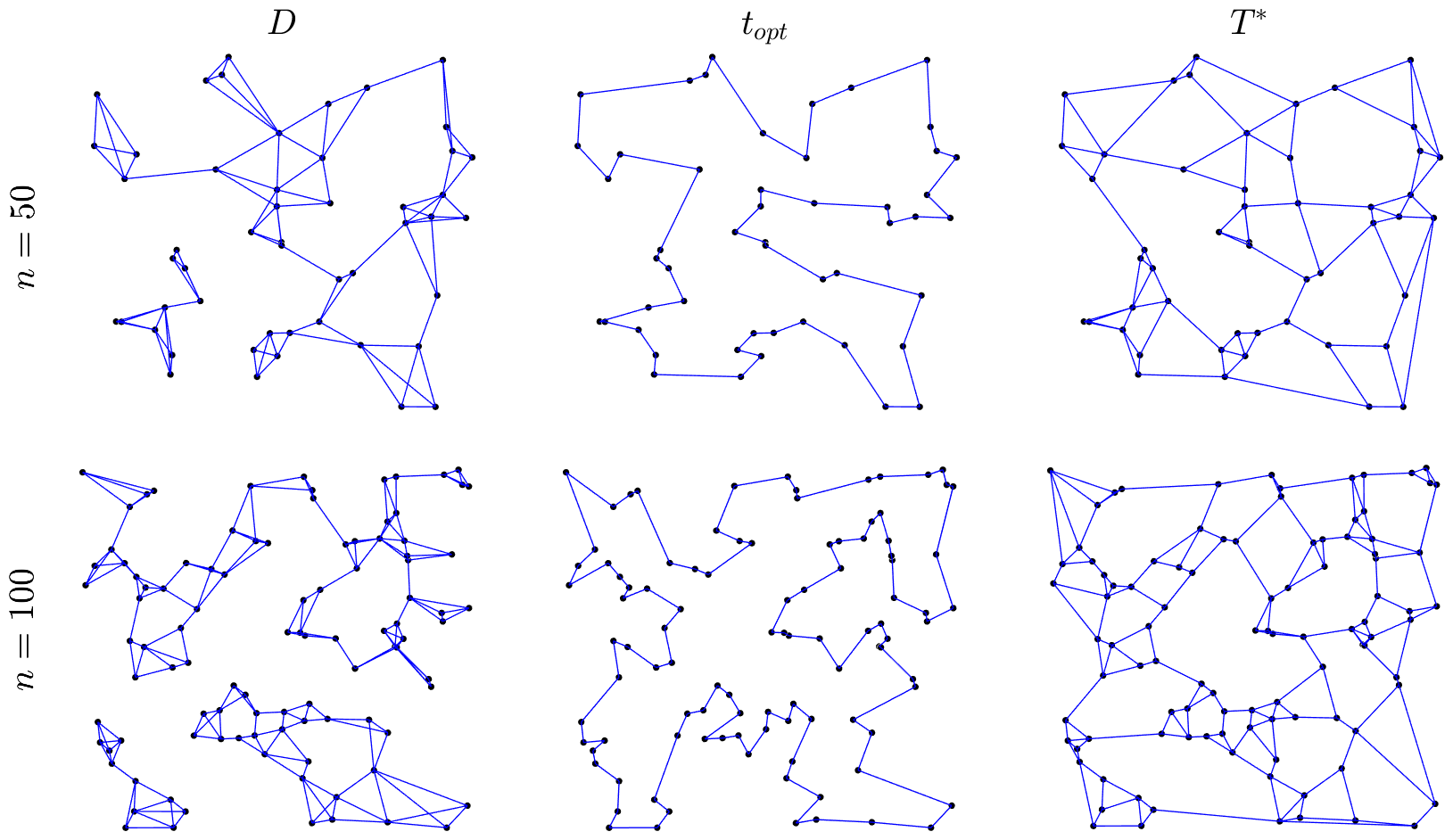}
    \caption{Illustration of $ P $-nearness for random TSP instances of size $50$ and $100$. The left column contains the edges with shortest distance, the center column has the optimal tour for the instances, and the right column contains the edges with the highest $ P $-nearness values for each city. For each city, we plotted the three edges with the highest nearness values.}
    \label{fig:distance_vs_procrustes}
\end{figure}

Thus, this is another example that demonstrates that dynamical systems theory can be used to analyze NP-hard problems and construct improved heuristics.

We now show how networks of Duffing oscillators can be used to construct a new algorithm for the iconic MAX-CUT problem.
\section{Novel algorithm construction: network of Duffing oscillators for the MAX-CUT problem}
\label{sec:maxsat_quantum}
\begin{overview}{Overview}
MAX-CUT~\cite{Cit:karp} is a well-known NP-hard problem that arises in graph theory. Simply stated, the goal is to compute a subset $S$ of the vertex set in a graph $\mathcal{G}$, such that the number of edges between $S$ and the rest of the graph are maximized. The best known approximation ratio of $0.878$ can be achieved in polynomial time using semi-definite programming~\cite{Cit:Goemans}. The problem naturally arises in VLSI design and statistical physics, and has been extensively studied.

In~\cite{Cit:quantum_net}, the authors construct an optimization algorithm by simulating the adiabatic evolution of Hamiltonian systems which can be used to approximate the MAX-CUT solution of an all-to-all connected graph. The approach is inspired by quantum adiabatic optimization for Ising systems~\cite{Cit:Ising} with the following energy,
\begin{align}
    E_{Ising}(s) = -\frac{1}{2}\displaystyle\sum_{i=1}^{N}\sum_{j=1}^{N}J_{ij}s_{i}s_{j},
    \label{eq:ising}
\end{align}
where $s_i$ are the spins which can take values \{-1,1\} and $J_{ij}$ is the coupling coefficient. Finding the lowest energy state of the Ising system is computationally challenging (for a system with N spins, the potential number of states is $2^N$). Note that one can map the Ising problem to the MAX-CUT problem by setting $J_{ij}=-w_{ij}$, where $w_{ij}$ is the weight of the edge that connects nodes $i$ and $j$. It is easy to show that minimizing the energy in Eqn.~\ref{eq:ising} corresponds computing the solution of the MAX-CUT problem.

The approach outlined in~\cite{Cit:quantum_net} relies on the adiabatic evolution of a network of nonlinear oscillators. This system exhibits a bifurcation (called ``simulated bifurcation'') for each nonlinear oscillator. The two branches correspond to the $-1$ and $+1$ values for each spin. The authors exploit Graphical Processing Units (GPU) and Field Programmable Gate Array (FPGA) platforms to compute the solution of these Hamiltonian systems. This method is compared to existing methods and displays orders-of-magnitude improvement. The approach is demonstrated on an Ising system with 100,000 spins. 
\end{overview}
Consider the Hamiltonian that arises in Kerr-nonlinear parametric oscillators,
\begin{align}
    H(x,y,t) &= \displaystyle\sum_{i=1}^{N}\left[\frac{K}{4}(x_{i}^2 + y_{i}^2)^2  - \frac{p(t)}{2}(x_{i}^2 - y_{i}^2) + \frac{\Delta_{i}}{2}(x_{i}^2 + y_{i}^2)\right] \nonumber\\
     &-\frac{\xi_{0}}{2}\displaystyle\sum_{i=1}^N\sum_{j=1}^N J_{ij}(x_i x_j + y_i y_j).
\label{eq:ham}
\end{align}
Here $x_i$ and $y_i$ are position and momentum of the $i$-th oscillator respectively, $K$ is the Kerr coefficient, $p(t)$ is the parametric pumping amplitude, and $\Delta_i$ is the detuning frequency between the natural frequency of the $i$-th oscillator and half the pumping frequency. Using the standard Hamiltonian formulation, one can derive equations of motion for each oscillator. Evolving these system of equations for $x_i$ and $y_i$ converges to low energy solutions of an Ising system (Eqn.~\ref{eq:ising}) with high probability. Thus, the sign of $x_i$ at the end of the simulation determines the $i$-th spin. However, the above equations are computationally challenging to simulate from a numerical perspective. 

Instead of using the equations that arise from the ``full'' Hamiltonian in Eqn.~\ref{eq:ham}, the authors (in~\cite{Cit:quantum_net}) construct a simplified Hamiltonian of the form,
\begin{align}
    H(x,y,t) &= \displaystyle\sum_{i=1}^{N} \frac{\Delta}{2}y_i^2 + V(x,t) \nonumber\\
    &= \displaystyle\sum_{i=1}^{N} \frac{\Delta}{2}y_i^2 + \left[\frac{K}{4}x_{i}^4 + \frac{\Delta - p(t)}{2}x_{i}^2\right] - \frac{\xi_{0}}{2}\displaystyle\sum_{i=1}^N\sum_{j=1}^N J_{ij}x_i x_j.
    \label{eq:ham2}
\end{align}
The above Hamiltonian corresponds to the following system of equations,
\begin{align}
    \dot x_{i} &= \Delta y_i \nonumber\\
    \dot y_{i} &= -\left[Kx_{i}^2 -p(t) + \Delta\right]x_{i} + \xi_{0} \displaystyle\sum_{i=1}^N\sum_{j=1}^N J_{ij}x_j.
    \label{eq:duffing_sys}
\end{align}
It is easy to see that the above system of equations are a network of Duffing oscillators~\cite{Cit:Gucken}. The separability of the Hamiltonian (Eqn.~\ref{eq:ham2}) makes the numerical integration of the system of equations easier. In particular, the authors use an explicit symplectic Euler scheme which makes it amenable for one to hard wire the resulting computational circuits on an FPGA platform. The computation proceeds as follows: all $x$ and $y$ variables are initially set to zero, $p(t)$ is then increased from $0$ and the system in Eqn.~\ref{eq:duffing_sys} is evolved. The sign of the final value of $x_i$ serves as an approximation of the $i$-th spin of the associated Ising system. 

The system in Eqn.~\ref{eq:duffing_sys} has two branches of solutions as $p(t)$ is increased from zero. It is easy to see that these branches correspond to $\pm\sqrt{p-\Delta/K}$ for each oscillator and, consequently, leads to a $2^N$ solution space. If one varies $p(t)$ slowly, the adiabatic theorem ensures that if one converges to a low energy solution for $p(t)$ close to $0$, the final solutions (for large $p(t)$) will also correspond to low energy.

This method is compared to state-of-the-art approaches for two instances of the MAX-CUT problem. In the first instance, an all-to-all 2000 node MAX-CUT problem is solved on an FPGA using the above approach. The authors demonstrate that the above framework successfully converges to the best known solutions~\cite{Cit:Goemans} very quickly. Moreover, they test the approach on a 100,000 size problem (with $5\times10^9$ edges) and find that their approach converges to the answer $100-1000$ times faster than existing software on GPU hardware. For more details of the work and associated results we refer the reader to~\cite{Cit:quantum_net}.

Thus far, we have summarized three examples in which dynamical systems theory was used to construct novel algorithms for NP-hard problems. We now discuss approaches that exploit nonlinear dynamics theory to analyze optimization algorithms for NP-hard problems.

\section{Analysis of algorithms: Koopman operators based analysis of algorithms}
\label{sec:koopman}
\begin{overview}{Overview}
Koopman operator theory is one of the most active and exciting sub-areas within dynamical systems theory~\cite{Cit:koop_mez,Cit:koop_klus, Cit:koop_kev,Cit:koop_kutz}. The approach is based on the construction of an infinite dimensional linear operator that captures the evolution of the observables of the underlying nonlinear system. Consequently, the spectra and eigenfunctions of the operator, capture system dynamics. This methodology has been used used in a wide variety of settings, including system control and identification. An advantage of Koopman operator based methodologies is that the computations are typically based on time trace data of system evolution~\cite{Cit:DMD}.  In recent work~\cite{Cit:koopman}, Koopman operator theory was used to analyze algorithms. In particular, the authors consider optimization algorithms that evolve their state in the form of iterations. An assumption is made that the algorithm state spaces $X\subseteq \mathbb{R}^d$ are smooth $k$-dimensional Riemannian submanifolds in $d$-dimensional Euclidean spaces. A single iteration of the state $x_n$ is represented as,
\begin{equation}
    x_{n+1} = a(x_n), n\in\mathbb{N},
\end{equation}
where $n$ is the iteration count. These iterative algorithms can sometimes be represented in continuous form (akin to the process used in~\cite{Cit:Nesterov}). In other words, one can (in the limit) represent the algorithm as a continuous vector field $v:X\rightarrow\mathbb{R}^k$. If one starts at an initial condition $x_0$, the continuous time representation is of the form~\cite{Cit:koopman},
\begin{equation}
    \frac{dc(s)}{ds} = v(c(s)),\quad c(0) = x_{0}.
\end{equation}
As shown in~\cite{Cit:koopman}, this continuous form can be approximated using a Koopman operator framework. The authors then use this approach to study gradient descent and Newton-Raphson from a global dynamics perspective. Although the examples fall under the category of continuous optimization, the approach can certainly be used to study combinatorial optimization algorithms in the future.
\end{overview}
Using the same nomenclature as in~\cite{Cit:koopman}, consider a dynamical system of the form, 
\begin{equation}
    \frac{dS_{t}(x_0)}{dt} = v(S_{t}(x_0)),
    \label{eq:dyn_algo}
\end{equation}
then the family of Koopman operators $\mathscr{K}^t$ acts on the function space of observables $g:X\rightarrow\mathbb{C}$ as follows,
\begin{equation}
    [\mathscr{K}^t g](x) = (g\circ S_{t})(x).
    \label{eq:koopman}
\end{equation}
An $L^2$ function space with an inner product is typically chosen for the space of observables. For more details, on the approach and choice of function space see~\cite{Cit:koopman}. Note that the Koopman operator is the adjoint of the Perron-Frobenius operator~\cite{Cit:Perron_Frob}. 
Letting $t=1$, without loss of generality, The Koopman operator can be expanded in terms of its spectrum,
\begin{equation}
    \mathscr{K} = \displaystyle\sum_k\lambda_k P_{\lambda_k} + \int_{\sigma_{ac}}\lambda dE(\lambda)
    \label{eq:spectrum}
\end{equation}
where $\lambda_k$ lie in the discrete part and $\sigma_{ac}$ is the continuous spectrum of the operator. $P_{\lambda}$ and $dE(\lambda)$ are projection operators for their corresponding eigenspaces. The eigenfunctions of the Koopman operator are,
\begin{equation}
    [\mathscr{K}^t \phi_{\lambda}](x) = (\phi_{\lambda}\circ S_{t})(x) = \lambda^t\phi_{\lambda}(x).
\end{equation}
Thus, one can predict the evolution of observables,
\begin{equation}
    \mathscr{K}^t g = \displaystyle\sum_{k}c_{k}\lambda^t\phi_{\lambda,k}.
    \label{eq:observerable}
\end{equation}

The operator can be numerically approximated using a data driven approach as outlined in~\cite{Cit:koop_kev,Cit:koop_kutz}. The popular extended dynamic mode decomposition (EDMD) methodology introduced in~\cite{Cit:koop_kev} is used to analyze algorithms using the Koopman operator lens~\cite{Cit:koopman}. 

The EDMD approach approximates the action of the infinite dimensional operator using a finite set of real-valued functions (also called ``dictionary''). In particular, given a smooth manifold $M \subset \mathbb{R}^d$ sampled by a finite set of points $X=\{x_{i}\in M\}$, the EDMD approach computes the action of the Koopman operator on the dictionary of points in $X$. The operator itself is approximated using a least squares approach~\cite{Cit:koop_kev} as outlined below. Given a dictionary of $N_D$ observables $D=\{d_i:M\rightarrow\mathbb{R}|\,\,i=1,\hdots,N_{D}\}$ one can define a matrix of the form $G=[d_{1}(X),d_{2}(X),\hdots,d_{N_D}(X)]$. Then the Koopman operator $\mathscr{K}^t$ can be approximated as,
\begin{equation}
    K = \frac{1}{N_X^2}(G^{T}G)^{\dagger} (A^{T}A),
\label{eq:koop_num}
\end{equation}
where $N_X$ is the size of the dataset and $A=\left[\mathscr{K}^t d_{1}(X), \mathscr{K}^t d_{2}(X),\hdots,\mathscr{K}^t d_{N_D}(X)\right]$. For more details see~\cite{Cit:koop_kev}.

The operator gives a local approximation of the underlying algorithm applied to a specific instance of a problem. In particular, one can use the data of a short burst of computation to compute a local approximation of the dynamics of the algorithm to \emph{accelerate} its convergence. The eigenvalues, vectors, and modes are computed using EDMD. This approximation is used as a data-driven surrogate for the system to accelerate optimization. In~\cite{Cit:koopman}, the authors use the following cost function,
\begin{equation}
    f(x_{1},x_{2}) = (x_{1}^2 + x_{2} - 11)^2 + (x_{1} + x_{2}^2 -7)^2,
\end{equation}
to demonstrate utility of the Koopman approach. The function has one local maximum and four local minima~\cite{Cit:koopman}. The authors study the the popular gradient descent algorithm using the Koopman operator framework. In particular, they use radial basis functions to form a dictionary and compute $503$ eigenvalues and eigenfunctions. They show that one can construct an ergodic decomposition of the state space, thereby separating the different basins of attraction~\cite{Cit:koopman}. The approach is able to capture the global dynamics of the algorithm in this setting, providing valuable insight regarding the performance and limitations of the algorithm.

Additionally, the authors demonstrate the use of Koopman operators for studying global dynamics of algorithms in high-dimensional spaces. They take the example of a $100$-dimensional problem and show that the dynamics quickly contracts to a low dimensional subset. They demonstrate that one can accelerate the prediction of trajectories of gradient descent with high accuracy. The work concludes with the illustration of the utility of Koopman operators for analyzing the iconic Newton-Raphson method for root finding~\cite{Cit:koopman}. For a complex polynomial of degree two, they show that the eigenfunction diverges at the roots. They also show that in cases of chaotic behavior of Newton-Raphson, one can use approximations~\cite{Cit:Mezic_cont} of the continuous spectrum of the Koopman operator to study statistical properties of the emergent chaos.

Although the above Koopman methodology was demonstrated on problems of continuous optimization, it provides a new technique with which one can study combinatorial optimization problems. We anticipate that the Koopman operator approach will be a new tool with which to study algorithms for NP-hard problems and improve their performance.

\section{Analysis of algorithms: Chaos and dynamical systems for analyzing the satisfiability (SAT) problem}
\label{sec:dyn_sat}
\begin{overview}{Overview}
The satisfiability problem is another iconic problem that frequently arises in the study of computational complexity theory. The challenge here is to find a satisfying assignment for a logical formula. In particular, a $k$-SAT Boolean formula $\phi(x)$ of $N$ Boolean variables and $m$ clauses, $\phi:\{0,1\}^{N}\rightarrow\{0,1\}$, is written
in the conjunctive normal form (CNF)~\cite{biere2009handbook} as follows,
\begin{equation}
\label{eq:sat}
\phi(x) = \bigwedge_{i=1}^m C_i = \bigwedge_{i=1}^m (x_{i_1} \lor x_{i_2} \lor\hdots\lor x_{i_{k}}),
\end{equation}
where $x_{i_l}$ is the $l^{\rm th}$ literal in clause $C_{i}$. A SAT formula is said to be 
\emph{satisfiable} if there exists an assignment for the binary variables $\x$ such that $\phi(\x)=1\,\,          (\text{true})$. It is well known that the satisfiability problem is NP-complete~\cite{Cit:cook}. A critical parameter associated with the satisfiability problem is the clause density $\alpha = m/N$~\cite{biere2009handbook}. In particular, the probability that a random $k$-SAT instance is satisfiable undergoes a phase transition as a function of $\alpha$ ($N\rightarrow\infty$)~\cite{biere2009handbook}.  The MAX-SAT problem (and the corresponding weighted version)~\cite{krentel1988complexity} requires one to find that assignment (or assignments) that maximize the number (or the cumulative weights) of satisfied clauses. Consider a SAT formula $\phi$, then every assignment $x$ can be mapped to an ``energy'' $\Phi(x)$ such that,
\begin{equation}
  \label{eq:1}
  \Phi(\x) = \sum_{i=1}^{m} C_{i},
\end{equation}
where $C_i=1$, if the $i$-th clause evaluates to $\text{true}$. In other words, the goal under the MAX-SAT problem is to find the assignment for $\x$
such that the number of satisfied clauses (or energy) is maximized. The MAX-SAT problem is harder (from a computational standpoint) than the SAT problem. In particular, it is known to be strongly NP-hard (there are no polynomial time approximation schemes). The problem of computing density of states (DOS) encompasses the the SAT and MAX-SAT problems. Classical and quantum algorithms for estimating the DOS of logical formulae were constructed in~\cite{Cit:SAT_sahai}.

In a series of seminal papers~\cite{Cit:zoltan,Cit:zoltan2,Cit:zoltan3}, the authors construct a dynamical systems approach to study satisfiability problems. They construct a dynamical system that computes the solutions of SAT instances. Here, the equilibria of the dynamical system correspond to literal values for which the SAT formula in Eqn.~\ref{eq:sat} evaluates to $\texttt{true}$. They prove that the dynamical system admits no \emph{false} equilibria or limit cycles. Additionally, they relate the emergence of transient chaos and fractal boundaries with optimization hardness of the problem instance~\cite{Cit:zoltan}, pointing to a deep connection between dynamical systems theory and computational complexity.
\end{overview}
%
As mentioned in the overview, in~\cite{Cit:zoltan}, the authors embed SAT equations into a system of ordinary differential equations using the the following mapping: let $s_{x_{i}} = \left[-1,1\right]$, i.e. $s_{x_{i}}$ can take values between $-1$ and $1$ such that,
\begin{align*}
s_{x_i} =
\begin{cases}
-1, & \text{if}\ x_i= \mbox{FALSE}  \\
1, & \text{if}\ x_i= \mbox{TRUE}.
\end{cases}
\end{align*}
Generalizing the dynamical system for satisfiability problems constructed in~\cite{Cit:zoltan}, one can define $c_{mi}$ and ${K_m}$ as follows,
\begin{align*}
c_{mi} =
\begin{cases}
-1, & \text{if}\ s_{x_i}\,\, \text{appears in negated form in m-th clause}  \\
1, & \text{if}\ s_{x_i}\,\, \text{appears in direct form in m-th clause}  \\
0, & \text{otherwise},
\end{cases}
\end{align*}

\begin{align*}
K_m(s) = 2^{-k} \prod_{j=0}^{k-1}\prod_{i=1}^{N} (1-c_{mi}s_{x_i}) \quad \forall m=1,2,\hdots,M.
\end{align*}
Note that $K_m(s)=0$, if and only if $m$-th clause is satisfied i.e. $c_{mi}s_{x_i}=1$ for at least one variable $x_i$ that appears in clause $m$. In~\cite{Cit:zoltan},  the authors define an energy function of the form $V(s)=\sum_{m=1}^{M}a_{m}K_{m}(s)^2$ such that $V(s^{*})=0$ only at a solution $s^{*}$ of the satisfiability problem. The \emph{auxiliary} variables $a_{m}\in (0,\infty)$ prevent the non-solution attractors from trapping the search dynamics (for more information see~\cite{Cit:zoltan}). 

In~\cite{Cit:zoltan}, the authors find that as the constraint density of the k-SAT problem increases, the trajectories of the dynamical system display intermittent chaos with fractal basin boundaries~\cite{Cit:zoltan}. Note that, in this work, the existence of chaos is associated with the emergence of positive finite size Lyapunov exponents (FSLE)~\cite{Cit:gucken} and the emergence of chaos corresponds to the well known phase transitions in the $k$-SAT problem~\cite{Cit:zoltan}.

In~\cite{Cit:zoltan2}, the authors further exploit the above system of equations to study the $k$-SAT problem with increasing constraint density. They find that hardness appears as a second order phase transition and discover that the resulting transient chaos displays a novel exponential-algebraic scaling. In~\cite{Cit:zoltan3}, the authors exploit the above framework to construct novel solvers for the MAX-SAT problem. This body of work demonstrates that dynamical systems theory can, in fact, be used to simultaneously study computational complexity theory and construct novel algorithms for NP-hard problems.

\section{Conclusion}
Combinatorial optimization is a wide and important area of research with numerous applications. For decades, computer scientists have developed novel algorithms for addressing these problems. Some  problems are amenable to algorithms and theory developed thus far (examples include graph routing and sorting), while others, in general, remain intractable from a computational standpoint (such as the traveling salesman problem and MAX-SAT) despite significant efforts. The classification of problems into different classes (such as NP, NP-hard, and PSPACE) and associated analyses has given rise to the field of computational complexity theory.

Nonlinear dynamics, on the other hand, arises in a multitude of engineering and scientific settings. The theory has been used to explain system behavior in a diverse set of fields such as fluidics, structural mechanics, population dynamics, epidemiology, optics, and aerospace propulsion. However, the application of the theory of dynamical systems to combinatorial optimization and computational complexity remains limited. 

In this survey article, we summarize five recent examples of using dynamical systems theory for constructing and analyzing combinatorial optimization problems. In particular, we cover a) a novel approach for clustering graphs using the wave equation partial differential equation (PDE),   b) invariant manifold computations for the traveling salesman problem, c) novel approaches for building quantum networks of Duffing oscillators to solve the MAX-CUT problem, d) applications of the Koopman operator for analyzing optimization algorithms, and e) the use of dynamical systems theory to analyze computational complexity of the SAT problem.

We note that the above set of examples are not comprehensive and there are several examples that have been omitted in this survey. However, the goal of this article is not to provide a complete list of all such examples but to demonstrate that dynamical systems theory can be exploited to construct algorithms and approaches for optimization problems. Even more importantly, we hope to inspire others to extend existing dynamical systems approaches to construct the next-generation of techniques and insights for combinatorial optimization.
\begin{acknowledgement}
This material is based upon work supported by the Defense Advanced Research Projects Agency (DARPA) and Space and Naval Warfare Systems Center, Pacific (SSC Pacific) under Contract No. N6600118C4031.
\end{acknowledgement}
%
%


\begin{thebibliography}{99.}%
%
%
%
\bibitem{Cit:Poincare} Poincar{\'e}, H.: Sur le probl{\`e}me des trois corps et les {\'e}quations de la dynamique. Acta Mathematica \textbf{13}, A3--A270 (1890)
\bibitem{Cit:Turing} Turing, A.: On computable problems with an application to the entscheidungsproblem. Proc. London Math. Society \textbf{2}, 42 (1936)

\bibitem{Cit:Church} Church, A.: A set of postulates for the foundation of logic. Annals of mathematics, 346--366 (1932)

\bibitem{Cit:Stro} Strogatz, S. H.: Nonlinear dynamics and chaos: with applications to physics, biology, chemistry, and engineering. CRC press  (2018).

\bibitem{Cit:Bio} Walleczek, J. (Ed.): Self-organized biological dynamics and nonlinear control: toward understanding complexity, chaos and emergent function in living systems. Cambridge University Press (2006).

\bibitem{Cit:chem} Kevrekidis, I. G., Schmidt, L. D., and Aris, R.: On the dynamics of periodically forced chemical reactors. Chemical engineering communications, 30(6), 323-330 (1984).

\bibitem{Cit:turb} Holmes, P., Lumley, J. L., Berkooz, G., and Rowley, C. W. (2012). Turbulence, coherent structures, dynamical systems and symmetry. Cambridge University Press.

\bibitem{Cit:epi} Liu, W. M., Hethcote, H. W., and Levin, S. A.: Dynamical behavior of epidemiological models with nonlinear incidence rates. Journal of mathematical biology, 25(4), 359-380 (1987).

\bibitem{Cit:set_oriented} Dellnitz, M. and Junge, O. Set oriented numerical methods for dynamical systems. Handbook of dynamical systems, 2, 221-264 (2002).

\bibitem{Cit:Igor} Mezi\'c, I. Analysis of fluid flows via spectral properties of the Koopman operator. Annual Review of Fluid Mechanics, 45, 357-378 (2013).

\bibitem{Cit:scientific_comp} Hummer, G. and Kevrekidis, I. G. Coarse molecular dynamics of a peptide fragment: Free energy, kinetics, and long-time dynamics computations. The Journal of chemical physics, 118(23), 10762-10773 (2003).

\bibitem{Cit:ergodicity} Hasselblatt, B. and Katok, A. (Eds.).  Handbook of dynamical systems. Elsevier (2002).

\bibitem{Cit:Lorenz} Lorenz, E. N. Deterministic nonperiodic flow. Journal of the atmospheric sciences, 20(2), 130-141 (1963).

\bibitem{Cit:symb} Morse, M. and Hedlund, G. A. Symbolic dynamics. American Journal of Mathematics, 60(4), 815-866 (1938).
 
\bibitem{Cit:Gucken} Guckenheimer, J., and Holmes, P. (2013). Nonlinear oscillations, dynamical systems, and bifurcations of vector fields (Vol. 42). Springer Science \& Business Media.

\bibitem{Cit:Nesterov} Nesterov, Y. A method for unconstrained convex minimization problem with the rate of convergence $O (1/k^ 2)$. In Doklady an ussr (Vol. 269, pp. 543-547) (1983).

\bibitem{Cit:boyd_candes} Su, W., Boyd, S., and Candes, E.  A differential equation for modeling Nesterov’s accelerated gradient method: Theory and insights. In Advances in Neural Information Processing Systems (pp. 2510-2518) (2014).

\bibitem{Cit:wibisono} Wibisono, A., Wilson, A. C., and Jordan, M. I. A variational perspective on accelerated methods in optimization. Proceedings of the National Academy of Sciences, 113(47), E7351-E7358 (2016).

\bibitem{Cit:cook} Cook, S. A. The complexity of theorem-proving procedures. In Proceedings of the third annual ACM Symposium on Theory of Computing, 151-158 (1971).

\bibitem{Cit:karp} Karp, R. M. Reducibility among combinatorial problems. In Complexity of computer computations, 85-103. Springer, Boston, MA (1972).

\bibitem{Cit:TSP} Lin, S.  Computer solutions of the traveling salesman problem. Bell System Technical Journal, 44(10), 2245-2269 (1965).

\bibitem{Cit:TSP_book} Cook, W. J. In pursuit of the traveling salesman: Mathematics at the limits of computation. Princeton University Press (2011).

\bibitem{Cit:lattice} Peikert, C. A decade of lattice cryptography. Foundations and Trends® in Theoretical Computer Science, 10(4), 283-424 (2016).

\bibitem{Cit:GI_klus} Klus, S., and Sahai, T.  A spectral assignment approach for the graph isomorphism problem. Information and Inference: A Journal of the IMA, 7(4), 689-706 (2018).

\bibitem{Cit:LKH} Helsgaun, K.  An effective implementation of the Lin–Kernighan traveling salesman heuristic. European Journal of Operational Research, 126(1), 106-130 (2000).

\bibitem{Cit:Concorde} Applegate, D., Bixby, R., Chvatal, V., and Cook, W. Concorde TSP solver (2006).

\bibitem{Cit:cluster1} Sahai, T., Speranzon, A., and Banaszuk, A. Wave equation based algorithm for distributed eigenvector computation. In 49th IEEE Conference on Decision and Control (CDC) (pp. 7308-7315)  (2010, December).

\bibitem{Cit:cluster2} Sahai, T., Speranzon, A., and Banaszuk, A. Hearing the clusters of a graph: A distributed algorithm. Automatica, 48(1), 15-24 (2012).

\bibitem{Cit:TSP_sahai} Sahai, T., Ziessler, A., Klus, S., and Dellnitz, M. Continuous relaxations for the traveling salesman problem. Nonlinear Dynamics, 97(4), 2003-2022 (2019).


\bibitem{Cit:quantum_net} Goto, H., Tatsumura, K., and Dixon, A. R. Combinatorial optimization by simulating adiabatic bifurcations in nonlinear Hamiltonian systems. Science Advances, 5(4), eaav2372 (2019).


\bibitem{Cit:koopman} Dietrich, F., Thiem, T. N., and Kevrekidis, I. G. On the Koopman operator of algorithms. arXiv preprint arXiv:1907.10807 (2019).

\bibitem{Cit:zoltan} Ercsey-Ravasz, M. and Toroczkai, Z. Optimization hardness as transient chaos in an analog approach to constraint satisfaction. Nature Physics, 7(12), 966-970 (2011).

\bibitem{Cit:zoltan2} Varga, M., Sumi, R., Toroczkai, Z., and Ercsey-Ravasz, M.  Order-to-chaos transition in the hardness of random Boolean satisfiability problems. Physical Review E, 93(5), 052211 (2016).

\bibitem{Cit:graph_algos} Even, S.  Graph algorithms. Cambridge University Press (2011).

\bibitem{Cit:cormen} Cormen, T. H., Leiserson, C. E., Rivest, R. L., and Stein, C. Introduction to algorithms. MIT press (2009). 

\bibitem{Cit:tutorial} Von Luxburg, U. A tutorial on spectral clustering. Statistics and computing, 17(4), 395-416 (2007).

\bibitem{Cit:npcomp} Wagner, D. and Wagner, F. Between min cut and graph bisection. In International Symposium on Mathematical Foundations of Computer Science (pp. 744-750). Springer, Berlin, Heidelberg (1993, August).

\bibitem{GolubVanLoan96} Golub, G. H. and Van Loan, C. F.  Matrix Computations Johns Hopkins University Press. Baltimore and London (1996).


\bibitem{Cit:kempe} Kempe, D. and McSherry, F.  A decentralized algorithm for spectral analysis. Journal of Computer and System Sciences, 74(1), 70-83 (2008).

\bibitem{DrumShape} Kac, M. Can one hear the shape of a drum?. The American Mathematical Monthly, 73(4P2), 1-23 (1966).

\bibitem{Cit:Num_klus} Klus, S., Sahai, T., Liu, C., and Dellnitz, M.  An efficient algorithm for the parallel solution of high-dimensional differential equations. Journal of computational and applied mathematics, 235(9), 3053-3062 (2011).

\bibitem{Cit:UQ_surana} Surana, A., Sahai, T., and Banaszuk, A.  Iterative methods for scalable uncertainty quantification in complex networks. International Journal for Uncertainty Quantification, 2(4) (2012).

\bibitem{Cit:target_tracking} Englot, B., Sahai, T., and Cohen, I.  Efficient tracking and pursuit of moving targets by heuristic solution of the traveling salesman problem. In 52nd IEEE Conference on Decision and Control (pp. 3433-3438). IEEE (2013, December).

\bibitem{Cit:Procrustes-book} J.C. Gower and G.B. Dijksterhuis. Procrustes problems, volume 3. Oxford University Press New York (2004).

\bibitem{Cit:tsp_asymm} Sahai, T., Klus, S., and Dellnitz, M. A Traveling Salesman Learns Bayesian Networks. arXiv preprint arXiv:1211.4888 (2012).

\bibitem{Cit:Sch68} Sch\"{o}nemann, P. H.  On two-sided orthogonal Procrustes problems. Psychometrika, 33(1), 19-33 (1968).

\bibitem{DH96} Dellnitz, M. and Hohmann, A. The computation of unstable manifolds using subdivision and continuation. In Nonlinear dynamical systems and chaos (pp. 449-459). Birkh\"auser, Basel (1996).

\bibitem{Cit:Goemans} Goemans, M. X., and Williamson, D. P.  Improved approximation algorithms for maximum cut and satisfiability problems using semidefinite programming. Journal of the ACM (JACM), 42(6), 1115-1145 (1995).

\bibitem{Cit:Ising} Glauber, R. J. Time‐dependent statistics of the Ising model. Journal of mathematical physics, 4(2), 294-307 (1963).

\bibitem{Cit:gucken} Guckenheimer, J. and Holmes, P.  Nonlinear oscillations, dynamical systems, and bifurcations of vector fields (Vol. 42). Springer Science \& Business Media (2013).

%
\bibitem{Cit:koop_mez} Budisi\'{c}, M., Mohr, R., and Mezi\'{c}, I. Applied Koopmanism. Chaos: An Interdisciplinary Journal of Nonlinear Science, 22(4), 047510 (2012). 

\bibitem{Cit:koop_klus} Klus, S., Koltai, P., and Sch\"{u}tte, C.  On the numerical approximation of the Perron-Frobenius and Koopman operator. arXiv preprint arXiv:1512.05997 (2015).

\bibitem{Cit:koop_kev} Williams, M. O., Kevrekidis, I. G., and Rowley, C. W.  A data–driven approximation of the Koopman operator: Extending dynamic mode decomposition. Journal of Nonlinear Science, 25(6), 1307-1346 (2015).

\bibitem{Cit:koop_kutz} Lusch, B., Kutz, J. N., and Brunton, S. L. Deep learning for universal linear embeddings of nonlinear dynamics. Nature communications, 9(1), 1-10 (2018).

\bibitem{Cit:DMD} Kutz, J. N., Brunton, S. L., Brunton, B. W., and Proctor, J. L. Dynamic mode decomposition: data-driven modeling of complex systems. Society for Industrial and Applied Mathematics (2016). 

\bibitem{Cit:Perron_Frob} Dellnitz, M., Hohmann, A., Junge, O., and Rumpf, M.  Exploring invariant sets and invariant measures. Chaos: An Interdisciplinary Journal of Nonlinear Science, 7(2), 221-228 (1997).

\bibitem{Cit:Mezic_cont} Korda, M., Putinar, M., and Mezić, I.  Data-driven spectral analysis of the Koopman operator. Applied and Computational Harmonic Analysis (2018).

%
\bibitem{Cit:zoltan3} Moln\'ar, B., Moln\'ar, F., Varga, M., Toroczkai, Z., and Ercsey-Ravasz, M.  A continuous-time MaxSAT solver with high analog performance. Nature communications, 9(1), 1-12 (2018).

\bibitem{biere2009handbook} Biere, A., Heule, M., and van Maaren, H. (Eds.). Handbook of satisfiability (Vol. 185). IOS press (2009). 

\bibitem{krentel1988complexity} Krentel, M. W. The complexity of optimization functions. Journal of Computer and System Sciences, 36(490-509), 10-1016 (1988).

\bibitem{Cit:SAT_sahai} Sahai, T., Mishra, A., Pasini, J. M., and Jha., S. Estimating the Density of States of Boolean Satisfiability Problems on Classical and Quantum Computing Platforms. 34th AAAI Conference on Artificial Intelligence (AAAI) 2020.

%
%
\end{thebibliography}
%
\biblstarthook{}

\end{document}